\let\d=\partial
\let\eps=\varepsilon
\let\wt=\widetilde
\def\cA{{\mathcal A}}
\def\cC{{\mathcal C}}
\def\cO{{\mathcal O}}
\def\cP{{\mathcal P}}
\def\cQ{{\mathcal Q}}
\def\cS{{\mathcal S}}
\def\N{{\mathbb N}}
\def\R{{\mathbb R}}
\def\T{{\mathbb T}}
\def\Z{{\mathbb Z}}
\def\virgp{\raise 2pt\hbox{,}}
\def\cdotpv{\raise 2pt\hbox{;}}
\def\Id{\mathop{\rm Id}\nolimits}
\def\div{ \hbox{\rm div}\,  }
\newcommand{\Frac}{\displaystyle \frac}
\def\ddj{\dot \Delta_j}
\def\na{\nabla}
\def\du{\delta\!u}
\def\dT{\delta\!\Theta}
\def\dU{\delta\!U}
\newcommand{\ef}{ \hfill $ \blacksquare $ \vskip 3mm}
\newtheorem{thm}{Theorem}[section]
\newtheorem{lem}{Lemma}[section]
\newtheorem{rmk}{Remark}[section]
\newtheorem{prop}{Proposition}[section]
\newcommand{\ben}{\begin{eqnarray}}
\newcommand{\een}{\end{eqnarray}}
\newcommand{\beno}{\begin{eqnarray*}}
\newcommand{\eeno}{\end{eqnarray*}}
\begin{document}
\title[The  incompressible limit in $L^p$ type critical spaces]
{The  incompressible limit in $L^p$ type critical spaces}
\author[R. Danchin]{Rapha\"{e}l Danchin}
\address[R. Danchin]{Universit\'{e} Paris-Est,  LAMA (UMR 8050), UPEMLV, UPEC, CNRS, Institut Universitaire de France,
 61 avenue du G\'{e}n\'{e}ral de Gaulle, 94010 Cr\'{e}teil Cedex, France.} \email{raphael.danchin@u-pec.fr}
\author[L. He]{Lingbing He}
\address[L. He]{Department of Mathematical Sciences, Tsinghua University\\
Beijing 100084,  P. R.  China.} \email{lbhe@math.tsinghua.edu.cn}

\begin{abstract} This paper aims at  justifying  the  low Mach number convergence to the incompressible
Navier-Stokes equations   for viscous compressible flows  in the \emph{ill-prepared data} case.
 The fluid domain is either the whole space, or the torus. 

A number of works have been dedicated to this classical issue, all of  them being, to our knowledge, 
related to $L^2$  spaces and to energy type arguments. In the present paper, we investigate the low Mach number convergence  \emph{in the  $L^p$ type critical regularity  framework}.
 More precisely, in the barotropic case,  the divergence-free part  of the initial velocity field just has to be bounded
in the critical Besov space $\dot B^{d/p-1}_{p,r}\cap\dot B^{-1}_{\infty,1}$
for some suitable $(p,r)\in[2,4]\times[1,+\infty].$
We still require $L^2$ type bounds  on the low frequencies of the potential part of the velocity
and on the density, though, an assumption which seems to be unavoidable in the ill-prepared data
framework, because of acoustic waves.

In the last part of the paper, our results are extended to the full Navier-Stokes system for heat conducting fluids.
\end{abstract}
\maketitle

We are concerned with the study of the convergence of the solutions to the 
compressible Navier-Stokes equations when the Mach number $\eps$ goes to $0.$ 
In the barotropic case, the system under consideration reads
$$
 \left\{\begin{array}{l}
\partial_t\rho^\eps+{\rm div}(\rho^\eps u^\eps)=0,\\[0.5ex]\displaystyle
\partial_t(\rho^\eps u^\eps)+{\rm div} (\rho^\eps u^\eps\otimes u^\eps)
-\div\bigl(2\mu(\rho^\eps)D(u^\eps)+\lambda(\rho^\eps)\div u^\eps\Id\bigr)+\frac{\nabla P^\eps}{\eps^2}=0,
\end{array}
\right.\leqno(NSC_\eps)
$$
where $\rho^\eps=\rho^\eps(t,x)\in\R_+$ stands for the  density, 
$u^\eps=u^\eps(t,x)\in\R^d,$  for the  velocity field,  $P^\eps=P(\rho^\eps)\in\R$ is the pressure,
$\lambda=\lambda(\rho^\eps)$ and $\mu=\mu(\rho^\eps)$
are the (given) viscosity functions that are assumed to satisfy $\mu>0$ and $\lambda+2\mu>0.$
Finally, $D(u^\eps)$ stands for the deformation tensor, that is $(D(u^\eps))_{ij}:=\frac12(\d_iu^{\eps,j}+\d_ju^{\eps,i}).$
We assume that the functions $P,$ $\lambda$ and $\mu$ are smooth, 
and we restrict our attention to the case where the fluid domain is either the whole space
$\R^d$ or the periodic box $\T^d$ (combinations such as $\T\times\R^{d-1}$ and so on may be considered as well). 
\smallbreak
At the formal level, in the low Mach number asymptotic, we expect $\rho^\eps$ to tend to some 
constant positive density $\rho^*$ (say $\rho^*=1$ for simplicity) 
and $u^\eps$ to tend to some vector field $v$ satisfying  the  (homogeneous) \emph{incompressible} Navier-Stokes equations: 
$$\left\{\begin{array}{l}\d_tv+v\cdot\nabla v-\mu(1)\Delta v+\nabla\Pi=0,\\[1ex]
\div v=0.
\end{array}
\right.\leqno(NS)
$$
This heuristics has been justified rigorously in different contexts (see e.g. \cite{D3,D4,DH,DG,DGLM,FN-book,HL,Hoff,KM,Klein,KLN,Lions,LM}). 
In the present paper, we want to consider   \emph{ill-prepared data} of the form
   $\rho_0^\eps=\rho^*+\eps a_0^\eps$ and  $u_0^\eps$
   where $(a_0^\eps,u_0^\eps)$ are bounded in a sense that will be specified later on.
Assuming (with no loss of generality) that $P'(\rho^*)=\rho^*=1$ and setting  $\rho^\eps =1+\eps a^\eps,$
we get the following system for $(a^\eps,u^\eps)$:
\begin{equation}
  \left\{
    \begin{aligned}
      & \d_t a^\eps+\frac{\div u^\eps}{\eps}=-\div (a^\eps u^\eps),\\
      &\d_t u^\eps + u^\eps\cdot\nabla u^\eps-\frac{\cA u^\eps}{1+\eps a^\eps} +\frac{\nabla  a^\eps}\eps=\frac{k(\eps a^\eps)}\eps\nabla a^\eps
      \\&\hspace{3cm}+\frac1{1+\eps a^\eps}\div\bigl(2\wt\mu(\eps a^\eps) D(u^\eps)+\wt\lambda(\eps a^\eps)
       \div u^\eps\Id\bigr),
        \end{aligned}
  \right.
  \label{eq:NS}
\end{equation}
where $\cA:= \mu\Delta+(\lambda+\mu)\nabla\div$  with $\lambda:=\lambda(1)$ and $\mu:=\mu(1),$ 
$$k(z):=-\frac{P'(1+z)}{1+z}+P'(1),
\quad\wt\mu(z):=\mu(1+z)-\mu(1)\ \hbox{ and }\ \wt\lambda(z):=\lambda(1+z)-\lambda(1).
$$ 
 In what follows, the exact value of functions $k,$ $\wt\lambda$ and $\wt\mu$ will not matter. We shall only use
 that those functions are smooth and vanish at $0.$
\medbreak
 We strive for \emph{critical regularity assumptions} consistent  with those of  
 the well-posedness issue for the limit system $(NS).$ 
At this stage, let us recall that, by definition,  critical spaces for $(NS)$ are  
norm invariant for all $\ell>0$ by the scaling transformations 
$T_\ell: v(t,x)\longmapsto \ell v(\ell^2t,\ell x),$ in accordance with the 
fact that $v$ is a solution to $(NS)$ if and only if so does $T_\ell v$ 
(provided the initial data has been changed accordingly of course). 

As first observed in \cite{D1}, in the context of the barotropic Navier-Stokes equations \eqref{eq:NS}, 
the relevant scaling transformations  read
\begin{equation}\label{eq:critical}
(a,u)(t,x)\longmapsto (a,\ell u)(\ell^2t,\ell x),\qquad\ell>0,
\end{equation}
which suggest our taking initial data $(a_0,u_0)$ 
in spaces invariant by $(a_0,u_0)(x)\mapsto (a_0,\ell u_0)(\ell x).$ 
\medbreak
In order to be more specific, 
 let us introduce  now the notations and  function spaces that will be used throughout the paper.
 For simplicity, we focus on the $\R^d$ case. Similar notations and definitions may be given 
 in the $\T^d$ case.
 
We are given an homogeneous Littlewood-Paley decomposition
$(\ddj)_{j\in\Z}$ that is a dyadic decomposition in the Fourier space for $\R^d.$
One may for instance set $\ddj:=\varphi(2^{-j}D)$ with $\varphi(\xi):=\chi(\xi/2)-\chi(\xi),$
and $\chi$ a non-increasing nonnegative smooth function supported in $B(0,4/3),$ and with 
value $1$ on $B(0,3/4)$  (see \cite{BCD}, Chap. 2 for more details).

We then define, for  $1\leq p,r\leq \infty$  and $s\in\R,$ the semi-norms
$$
\|z\|_{\dot B^s_{p,r}}:=\bigl\|2^{js}\|\ddj z\|_{L^p(\R^d)}\bigr\|_{\ell^r(\Z)}.
$$
Like in \cite{BCD},  we adopt the following definition of homogeneous
Besov spaces, which turns out to be well adapted to the study of nonlinear PDEs:
$$
\dot B^s_{p,r}=\Bigl\{z\in\cS'(\R^d) : \|z\|_{\dot B^s_{p,r}}<\infty \ \hbox{ and }\ \lim_{j\to-\infty} 
\|\dot S_jz\|_{L^\infty}=0\Bigr\}
\quad\hbox{with }\ \dot S_j:=\chi(2^{-j}D).$$
As we shall work with \emph{time-dependent functions} valued in Besov spaces,
we  introduce the norms:
$$
\|u\|_{L^q_T(\dot B^s_{p,r})}:=\bigl\| \|u(t,\cdot)\|_{\dot B^s_{p,r}}\bigr\|_{L^q(0,T)}.
$$
As  pointed out in \cite{CH},  when  using parabolic estimates in Besov spaces, it is somehow natural
to take the time-Lebesgue norm \emph{before} performing the summation for computing
the Besov norm.  This motivates our introducing the following quantities:
$$
 \|u\|_{\wt L_T^q(\dot {B}^{s}_{p,r})}:= \bigl\|(2^{js}\|\ddj u\|_{L_T^q(L^p)})\bigr\|_{\ell^r(\Z)}.
$$
The index $T$ will be omitted if $T=+\infty$ and we shall denote by $\wt\cC_b(\dot B^s_{p,r})$
 the subset of functions of  $\wt L^\infty(\dot B^s_{p,r})$
which are also  continuous from  $\R_+$ to $\dot B^s_{p,r}$.

Let us emphasize that, owing to Minkowski inequality, we have if  $r\le q$
$$
 \|z\|_{L_T^q(\dot {B}^{s}_{p,r})}\leq \|z\|_{\wt L_T^q(\dot {B}^{s}_{p,r})}
$$
with equality if and only if $q=r.$ Of course, the opposite inequality occurs if $r\geq q.$  
\medbreak
An important  example where those nonclassical norms  are  suitable is the heat equation
\begin{equation}\label{eq:heat0}
\d_tz-\mu\Delta z=f,\qquad z|_{t=0}=z_0
\end{equation}
for which the following family of inequalities holds true (see \cite{BCD,CH}):
\begin{equation}\label{eq:heat}
 \|z\|_{\wt L_T^m(\dot {B}^{s+2/m}_{p,r})}\leq C\bigl(\|z_0\|_{\dot B^s_{p,r}}
 + \|f\|_{\wt L_T^1(\dot {B}^{s}_{p,r})}\bigr)
\end{equation}
for any $T>0,$ $1\leq m,p,r\leq\infty$ and $s\in\R.$
\medbreak
Restricting ourselves to the case of \emph{small and global-in-time} solutions (just for simplicity), 
the reference global well-posedness result for $(NS)$  that we have in mind reads 
as follows\footnote{The statement in the Sobolev framework is due to H. Fujita and T. Kato in \cite{FK}. 
Data in general critical Besov spaces, with a slightly different solution space, have been considered
by H. Kozono and M. Yamazaki in \cite{KY}, and by  M. Cannone, Y. Meyer and F. Planchon in \cite{CMP}. 
The above statement has been proved exactly under this shape by J.-Y. Chemin in \cite{CH}.}:
\begin{thm}\label{thm:NS}
 Let $u_0\in \dot B^{d/p-1}_{p,r}$ with $\div u_0=0$ and $p<\infty,$ and $r\in[1,+\infty].$ 
There exists $c>0$ such that if $$\|u_0\|_{\dot B^{d/p-1}_{p,r}}\leq c\mu$$
then $(NS)$ has a unique global solution $u$ in the space 
$$
\wt L^\infty(\R_+;\dot B^{d/p-1}_{p,r})\cap \wt L^1(\R_+;\dot B^{d/p+1}_{p,r}),
$$
which is also in $\cC(\R_+;\dot B^{d/p-1}_{p,r})$ if $r<\infty.$ Besides, we have
\begin{equation}\label{eq:estins}
 \|u\|_{\wt L^\infty(\dot B^{d/p-1}_{p,r})}+\mu\|u\|_{\wt L^1(\dot B^{d/p+1}_{p,r})}\leq 
 C\|u_0\|_{\dot B^{d/p-1}_{p,r}},
 \end{equation}
for some constant $C$ depending only on $d$ and $p.$
\end{thm}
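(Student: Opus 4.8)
The plan is to recast $(NS)$ as a fixed point equation for the velocity alone and to solve it by a contraction argument. Applying to the momentum equation the Leray projector $\mathbb P:=\Id-\nabla(-\Delta)^{-1}\div$ onto divergence-free vector fields eliminates the pressure, and Duhamel's formula turns $(NS)$ into $v=e^{t\mu\Delta}u_0+\cB(v,v)$, where
\[
\cB(u,v):=-\int_0^te^{(t-s)\mu\Delta}\,\mathbb P\,\div(u\otimes v)(s)\,ds.
\]
I would look for $v$ in the space $E:=\wt\cC_b(\R_+;\dot B^{d/p-1}_{p,r})\cap\wt L^1(\R_+;\dot B^{d/p+1}_{p,r})$ (with $\wt L^\infty$ in place of $\wt\cC_b$ when $r=\infty$), equipped with $\|v\|_E:=\|v\|_{\wt L^\infty(\dot B^{d/p-1}_{p,r})}+\mu\|v\|_{\wt L^1(\dot B^{d/p+1}_{p,r})}$; the $\mu$-weight on the high-regularity term is tuned so that, by the smoothing inequality \eqref{eq:heat} used with $s=d/p-1$, $f=0$ and $m\in\{1,\infty\}$ — keeping track of the factor $\mu^{-1/m}$ produced by the time integration — the heat flow satisfies the \emph{$\mu$-uniform} bound $\|e^{t\mu\Delta}u_0\|_E\le C_0\|u_0\|_{\dot B^{d/p-1}_{p,r}}$ with $C_0=C_0(d,p)$. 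The whole construction then reduces to proving the bilinear estimate $\|\cB(u,v)\|_E\le C_1\mu^{-1}\|u\|_E\|v\|_E$ with $C_1=C_1(d,p)$, after which the classical fixed point lemma for maps of the form $v\mapsto v_0+\cB(v,v)$ concludes.

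For the bilinear estimate, applying \eqref{eq:heat} once more — now with $f=-\mathbb P\,\div(u\otimes v)$ and the same $s$, $m$ — reduces matters to controlling $\|\mathbb P\,\div(u\otimes v)\|_{\wt L^1(\dot B^{d/p-1}_{p,r})}$; since $\mathbb P$ and $\div$ are homogeneous Fourier multipliers of respective orders $0$ and $1$, this is $\lesssim\|u\otimes v\|_{\wt L^1(\dot B^{d/p}_{p,r})}$. Everything thus comes down to the product law
\begin{multline*}
\|u\otimes v\|_{\wt L^1_T(\dot B^{d/p}_{p,r})}\lesssim\|u\|_{\wt L^\infty_T(\dot B^{d/p-1}_{p,r})}\|v\|_{\wt L^1_T(\dot B^{d/p+1}_{p,r})}\\
{}+\|v\|_{\wt L^\infty_T(\dot B^{d/p-1}_{p,r})}\|u\|_{\wt L^1_T(\dot B^{d/p+1}_{p,r})}.
\end{multline*}
Indeed, combined with $\|w\|_{\wt L^\infty(\dot B^{d/p-1}_{p,r})}\le\|w\|_E$ and $\mu\|w\|_{\wt L^1(\dot B^{d/p+1}_{p,r})}\le\|w\|_E$, this product law gives at once the announced $\|\cB(u,v)\|_E\le C_1\mu^{-1}\|u\|_E\|v\|_E$ and, by bilinearity, the matching Lipschitz estimate for $\cB(u,v)-\cB(u',v')$.

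The \textbf{main obstacle} — indeed the only non-routine step — is establishing this product law for the \emph{whole} range $1\le r\le\infty$, for when $r>1$ the critical space $\dot B^{d/p}_{p,r}$ is not an algebra and there is no fixed-time bound $\|u\otimes v\|_{\dot B^{d/p}_{p,r}}\lesssim\|u\|_{\dot B^{d/p}_{p,r}}\|v\|_{\dot B^{d/p}_{p,r}}$ to lean on. I would get around this by Bony's decomposition $u\otimes v=T_uv+T_vu+R(u,v)$ together with the Chemin--Lerner convention (the $L^1$-in-time norm taken \emph{before} the $\ell^r$ summation): in each of the three pieces a dyadic block of the factor carrying the $\dot B^{d/p-1}_{p,r}$ norm is placed in $L^\infty_T(L^\infty)$ via Bernstein's inequality (which costs a power $2^{kd/p}$), while the other factor, placed in $L^1_T(L^p)$, supplies the $\dot B^{d/p+1}_{p,r}$ norm; the ensuing dyadic sums then converge by $\ell^1$-convolution estimates — a geometric series over low frequencies in the paraproducts (its ratio being fixed by the one-derivative gap between $\dot B^{d/p-1}_{p,r}$ and $\dot B^{d/p+1}_{p,r}$), and a geometric series over the near-diagonal in the remainder, this last one being where $p<\infty$, i.e.\ $d/p>0$, is used. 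No restriction on $(p,r)$ beyond $p<\infty$, $1\le r\le\infty$, appears here.

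With the linear and bilinear estimates in hand, the fixed point lemma (see \cite{BCD}) applies: if $4C_0C_1\|u_0\|_{\dot B^{d/p-1}_{p,r}}<\mu$, i.e.\ $\|u_0\|_{\dot B^{d/p-1}_{p,r}}\le c\mu$ with $c:=1/(4C_0C_1)$, then $v\mapsto e^{t\mu\Delta}u_0+\cB(v,v)$ has a unique fixed point $v$ in the ball $\{\|v\|_E\le 2C_0\|u_0\|_{\dot B^{d/p-1}_{p,r}}\}$ of $E$; this is the global solution announced, and $\|v\|_E\le 2C_0\|u_0\|_{\dot B^{d/p-1}_{p,r}}$ is precisely \eqref{eq:estins} with $C=2C_0$. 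It then remains to promote uniqueness from this small ball to all of $E$ and to establish the time-continuity statement, both of which are classical. For $r<\infty$, the difference $w=v_1-v_2$ of two solutions vanishes at $t=0$, so $\|w\|_{E_{T_0}}\to0$ as $T_0\to0$; feeding this into a stability estimate for $w$ (after a loss of one derivative if $p$ is large) gives $w\equiv0$ near $t=0$, and a continuation argument extends this to $\R_+$, while the borderline case $r=\infty$ is covered by the refined analysis of \cite{CH}. Finally, when $r<\infty$ the map $t\mapsto e^{t\mu\Delta}u_0$ is continuous from $\R_+$ into $\dot B^{d/p-1}_{p,r}$ by density of smooth functions, and since $\mathbb P\,\div(v\otimes v)\in L^1(\R_+;\dot B^{d/p-1}_{p,r})$ — Minkowski's inequality giving $\wt L^1\hookrightarrow L^1$ for $r\ge1$ — its Duhamel integral against the heat semigroup is continuous from $\R_+$ into $\dot B^{d/p-1}_{p,r}$ as well; hence $v\in\cC(\R_+;\dot B^{d/p-1}_{p,r})$.
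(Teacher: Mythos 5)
The paper does not prove this theorem: it is quoted as a background result, with a footnote attributing the statement in exactly this form to Chemin \cite{CH} (after \cite{FK,KY,CMP}), so there is no internal proof to compare against. Your argument --- Duhamel's formula after applying the Leray projector, the $\mu$-uniform linear bound extracted from \eqref{eq:heat}, the bilinear estimate reduced via Bony's decomposition to the product law $\dot B^{d/p-1}_{p,r}\times\dot B^{d/p+1}_{p,r}\to\dot B^{d/p}_{p,r}$ in Chemin--Lerner spaces (where $p<\infty$ enters through $2d/p>0$ in the remainder), and the contraction mapping lemma --- is precisely the classical proof from those references, and it is correct. The only genuinely delicate point is the one you flag yourself: uniqueness in all of $\wt L^\infty(\dot B^{d/p-1}_{p,r})\cap\wt L^1(\dot B^{d/p+1}_{p,r})$ rather than in the small ball, which for large $p$ requires a stability estimate with loss of regularity and for $r=\infty$ the refined analysis of \cite{CH}; deferring that step to the literature is consistent with how the paper itself treats the theorem.
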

Although Theorem \ref{thm:NS}  is not related to energy arguments, to our knowledge, 
all the mathematical results proving the convergence of  $(NSC_\eps)$ to $(NS),$ 
strongly rely on the use of $L^2$ type norms in order to get estimates 
independent of $\eps.$  This is  due to the presence of  singular first order
skew symmetric terms (which disappear when performing $L^2$ or $H^s$ estimates)
in  the following linearized  equations of \eqref{eq:NS}:
\begin{equation}
  \left\{
    \begin{aligned}
      & \d_t a^\eps+\frac{\div u^\eps}{\eps}=f^\eps,\\
      &\d_t u^\eps -\cA u^\eps +\frac{\nabla  a^\eps}\eps=g^\eps. 
     \end{aligned}
  \right.
  \label{eq:LNS}
\end{equation}
However, it is clear that those singular terms do not affect the divergence-free part $\cP u^\eps$ of the velocity, 
which just satisfies the heat equation \eqref{eq:heat0}. We thus expect handling $\cP u^\eps$ to be doable 
by means of  a $L^p$ type approach similar to that of  Theorem \ref{thm:NS}. 
At the same time, for low frequencies (`low' meaning small with respect to $(\eps\nu)^{-1}$), 
the singular terms tend to dominate the evolution 
of $a^\eps$ and $\cQ u^\eps,$ which precludes  a $L^p$-type approach with $p\not=2,$ 
as the wave equation is ill-posed in such spaces. Finally,  for very high  frequencies (that is
greater than  $(\eps\nu)^{-1}$), 
it is well known that $a^\eps$ and $\cQ u^\eps$ tend to behave as the solutions of a damped equation and of a heat equation, respectively,  and are thus tractable in $L^p$ type spaces.  Besides,  keeping in mind the
notion of critical space introduced in \eqref{eq:critical},  it is natural to work 
at the same level of regularity for $\nabla a^\eps$ and  $\cQ u^\eps$  (see e.g. \cite{BCD}, Chap. 10, or \cite{CD} for more explanations). 
The rest of the paper is devoted to clarifying this heuristics, first
in the barotropic case (Sections \ref{s:main} to \ref{s:strong}), and next for the full Navier-Stokes-Fourier
system (Section~\ref{s:full}).



\section{Main results} \label{s:main}

Before stating our main results, let us introduce some notation. {}From now on, we agree that for $z\in\cS'(\R^d),$  
\begin{equation}
\label{eq:decompo}  z^{\ell,\alpha}:=\sum_{2^j\alpha\leq 2^{j_0}} \ddj z\quad\mbox{and}\quad z^{h,\alpha}:=\sum_{2^j\alpha>2^{j_0}} \ddj z, \end{equation}
 for some large enough nonnegative integer $j_0$  depending only on $p,$ $d,$ and on the functions $k,$  $\lambda/\nu,$ $\mu/\nu$ with $\nu:=\lambda+2\mu.$
 The corresponding  ``truncated'' semi-norms are defined  as follows: 
\beno \|z\|^{\ell, \alpha}_{\dot B^{\sigma}_{p,r}}:=  \|z^{\ell,\alpha}\|_{\dot B^{\sigma}_{p,r}} 
\ \hbox{ and }\   \|z\|^{h,\alpha}_{\dot B^{\sigma}_{p,r}}:=  \|z^{h,\alpha}\|_{\dot B^{\sigma}_{p,r}}. \eeno

Let $\wt\eps:=\eps\nu.$ Based on the heuristics of the introduction, it is natural  to consider families of data $(a_0^\eps,u_0^\eps)$ so that
\begin{itemize}
\item $(a_0^\eps,\cQ u_0^\eps)^{\ell,\wt\eps} \in\dot B^{d/2-1}_{2,1},$
\item $(a_0^\eps)^{h,\wt\eps}\in \dot B^{d/p}_{p,1},\quad (\cQ u_0^\eps)^{h,\wt\eps}\in \dot B^{d/p-1}_{p,1},$
\item $\cP u_0^\eps\in \dot B^{d/p-1}_{p,r}\cap\dot B^{-1}_{\infty,1}.$
\end{itemize}
Recall that $\dot B^{d/p-1}_{p,r}$ is only embedded in $\dot B^{-1}_{\infty,r}.$
The reason why we prescribe the slightly stronger  assumption $\dot B^{-1}_{\infty,1}$ for $\cP u_0^\eps$ 
is that we need the constructed velocity  to have gradient in  $L^1(\R_+;L^\infty)$
in order to preserve the Besov regularity of $a^\eps$ through the mass equation. 
Indeed, it is well known that for a solution $z$ to the free heat equation, the norm of
$\nabla z$ in $L^1(\R_+;L^\infty)$ is equivalent to that  of $z_0$ in $\dot B^{-1}_{\infty,1}$
(see e.g. \cite{BCD}, Chap. 2).
\medbreak
Our assumptions on the data induce us to look for a solution  to \eqref{eq:NS} in the space $X^{p,r}_{\eps,\nu}$  
of functions $(a,u)$ such that 
\begin{itemize}
\item  $(a^{\ell,\wt\eps} ,\cQ u^{\ell,\wt\eps} )\in \wt\cC_b(\R_+;\dot B^{d/2-1}_{2,1})\cap L^1(\R_+;\dot B^{d/2+1}_{2,1}),$
\item  $a^{h,\wt\eps}\in \wt\cC_b(\R_+;\dot B^{d/p}_{p,1})\cap L^1(\R_+; \dot B^{d/p}_{p,1}),$
\item $\cQ u^{h,\wt\eps}\in\wt\cC_b(\R_+;\dot B^{d/p-1}_{p,1})\cap L^1(\R_+;\dot B^{d/p+1}_{p,1}),$
\item $\cP u\in\wt\cC_b(\R_+;\dot B^{d/p-1}_{p,r}\cap \dot B^{-1}_{\infty,1})\cap \wt L^1(\R_+;\dot B^{d/p+1}_{p,r}\cap \dot B^{1}_{\infty,1})$ (only weak continuity in $\dot B^{d/p-1}_{p,r}$ if $r=\infty$).
\end{itemize}
We shall endow that space  with the norm:
$$\displaylines{
\|(a,u)\|_{X^{p,r}_{\eps,\nu}}\!:=\! \|(a,\cQ u)\|^{\ell,\wt\eps}_{\wt L^\infty(\dot B^{d/2-1}_{2,1})}\!+\!\|\cQ u \|^{h,\wt\eps}_{\wt L^\infty(\dot B^{d/p-1}_{p,1})}
+\|\cP u\|_{\wt L^\infty(\dot B^{d/p-1}_{p,r}\cap \dot B^{-1}_{\infty,1})}
\!+\wt\eps\|a\|^{h,\wt\eps}_{\wt L^\infty(\dot B^{d/p}_{p,1})}
\hfill\cr\hfill+\nu\|(a,\cQ u)\|^{\ell,\wt\eps}_{L^1(\dot B^{d/2+1}_{2,1})}+\nu\| \cQ u\|^{h,\wt\eps} _{L^1(\dot B^{d/p+1}_{p,1})}+\nu\|\cP u\|_{\wt L^1(\dot B^{d/p+1}_{p,r}\cap \dot B^{1}_{\infty,1})}+\eps^{-1}\|a\|^{h,\wt\eps}_{L^1(\dot B^{d/p}_{p,1})}.}
$$

\medbreak
Our main result reads as follows:

\begin{thm}\label{th:main1} Assume that the fluid domain is either $\R^d$ or $\T^d,$ 
that the initial data $(a_0^\eps, u_0^\eps)$ are as above with $1\leq r\leq p/(p-2)$ and that, in addition,
\begin{itemize}
\item  Case $d=2$: $2\leq p<4,$ \smallbreak
\item Case $d=3$: $2\leq p\leq4,$ \smallbreak
\item Case $d\geq4$: $2\leq p<2d/(d-2),$ or $p=2d/(d-2)$ and $r=1.$
\end{itemize}
 Let $\wt\eps:=\eps\nu.$ There exists a constant $\eta$ independent of $\eps$ and of $\nu$ such that  if
\begin{equation}\label{eq:smalldata1}
C_0^{\eps,\nu}:=\|(a_0^\eps,\cQ u_0^\eps)\|^{\ell,\wt\eps}_{\dot B^{d/2-1}_{2,1}}+\|\cQ u_0^\eps\|^{h,\wt\eps}_{\dot B^{d/p-1}_{p,1}}+\|\cP u_0^\eps\|_{\dot B^{d/p-1}_{p,r}\cap \dot B^{-1}_{\infty,1}}+\wt\eps\|a_0^\eps\|^{h,\wt\eps}_{\dot B^{d/p}_{p,1}}\leq \eta \nu,
\end{equation}
then System \eqref{eq:NS} with initial data $(a_0^\eps, u_0^\eps)$ has a global
solution $(a^\eps,u^\eps)$   in the space $X^{p,r}_{\eps,\nu}$ with, for some constant $C$ independent of $\eps$ and $\nu,$
\begin{equation}\label{eq:ue}
\|(a^\eps,u^\eps)\|_{X^{p,r}_{\eps,\nu}}\leq CC_0^{\eps,\nu}.
\end{equation}
In addition, $\cQ u^\eps$ converges weakly to $0$  when $\eps$ goes to $0,$ and, if $\cP u^\eps_0\rightharpoonup v_0$ then  $\cP u^\eps$ converges in the sense of distributions  to the solution of
\begin{equation}\label{eq:ins}
\d_tu+\cP(u\cdot\nabla u)-\mu\Delta u=0,\qquad u|_{t=0}=v_0.
\end{equation}
Finally, if  the fluid domain is $\R^d$ and  $d\ge 3$ then we have
$$\displaylines{
\nu^{1/2}\|(a^\eps,\cQ u^\eps)\|_{\wt L^2(\dot B^{(d+1)/p-1/2}_{p,1})}\leq CC_0^{\eps,\nu}\wt\eps^{1/2-1/p}\quad\hbox{and}\cr
  \|\cP u^\eps-v\|_{\wt L^\infty(\dot B^{(d\!+\!1)/p-3/2}_{p,r})}+\mu\|\cP u^\eps-v\|_{\wt L^1(\dot B^{(d+1)/p+1/2}_{p,r})}\leq 
  C\bigl(\|\cP u_0^\eps-v_0\|_{\dot B^{(d\!+\!1)/p-3/2}_{p,r}}+  C_0^{\eps,\nu}\wt\eps^{1/2-1/p}\bigr).}
$$
In the $\R^2$ case, we have, 
$$\displaylines{
\nu^{1/2}\|(a^\eps,\cQ u^\eps)\|_{\wt L^2(\dot B^{(c+2)/p-c/2}_{p,1})}\leq CC_0^{\eps,\nu}\wt\eps^{\,c(1/2-1/p)}\quad\hbox{and}\cr
  \|\cP u^\eps-v\|_{\wt L^\infty(\dot B^{(c+2)/p-c/2-1}_{p,r})}+\mu\|\cP u^\eps-v\|_{\wt L^1(\dot B^{(c+2)/p-c/2+1}_{p,r})}
  \hfill\cr\hfill \leq C\bigl(\|\cP u_0^\eps-v_0\|_{\dot B^{(c+2)/p-c/2-1}_{p,r}}+  C_0^{\eps,\nu}\wt\eps^{\,c(1/2-1/p)}\bigr)}
$$
 where  the constant  $c$ verifies the conditions $0\leq c\leq 1/2$ and $c<(8-2p)/(p-2)$. 
 \end{thm}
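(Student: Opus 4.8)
The plan is to run a standard Banach fixed-point / continuation scheme in the space $X^{p,r}_{\eps,\nu}$, but with the frequency-localized splitting dictated by the norm of $X^{p,r}_{\eps,\nu}$: low frequencies (below $\wt\eps^{-1}$) of $(a^\eps,\cQ u^\eps)$ are treated together as a symmetric hyperbolic-parabolic system in $L^2$-based Besov spaces, high frequencies of $a^\eps$ and $\cQ u^\eps$ are handled by the damped/parabolic structure available there, and $\cP u^\eps$ is handled purely parabolically via \eqref{eq:heat} in the $L^p$-based space $\dot B^{d/p-1}_{p,r}\cap\dot B^{-1}_{\infty,1}$. First I would record the linear estimates for \eqref{eq:LNS}: an $L^2$-type estimate for the full low-frequency block of $(a^\eps,\cQ u^\eps)$ (using that the singular terms $\eps^{-1}\div u^\eps$, $\eps^{-1}\na a^\eps$ are skew-symmetric and drop out of the energy, while the viscosity gives parabolic smoothing at rate $\nu$ on frequencies $\lesssim\wt\eps^{-1}$), a damped estimate for $a^{h,\wt\eps}$ together with a parabolic estimate for $(\cQ u^\eps)^{h,\wt\eps}$ (this is the effective-velocity / Hoff-type trick: the combination behaves like a heat equation for $\cQ u$ and a damped equation for $a$ once $2^j\wt\eps\gtrsim1$), and finally the plain heat estimate \eqref{eq:heat} for $\cP u^\eps$ in both $\dot B^{d/p-1}_{p,r}$ and $\dot B^{-1}_{\infty,1}$. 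These combine to give $\|(a,u)\|_{X^{p,r}_{\eps,\nu}}\lesssim C_0^{\eps,\nu}+(\text{norms of the source terms in the appropriate }\wt L^1\text{ spaces})$.

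Next I would estimate the nonlinear source terms $-\div(a^\eps u^\eps)$, $u^\eps\cdot\na u^\eps$, $\frac{k(\eps a^\eps)}{\eps}\na a^\eps$, the viscosity perturbations $\frac{1}{1+\eps a^\eps}\div(\wt\mu(\eps a^\eps)D(u^\eps)+\cdots)$ and the commutator $\cA u^\eps(\frac1{1+\eps a^\eps}-1)$, splitting each into low- and high-frequency parts and using the paraproduct/product laws in Besov spaces (Bony decomposition), together with the gain coming from the powers of $\eps$: the factor $k(\eps a^\eps)/\eps = \eps\,\tilde k(\eps a^\eps)\,a^\eps\cdot(\text{bounded})$ costs one power of $a$ but gains a factor $\eps$, and similarly $\div(a^\eps u^\eps)$ is fine because in the mass equation $a$ lives at regularity $d/p$ (resp. $d/2$) — one derivative higher than $u$ — which is precisely the critical balance \eqref{eq:critical}. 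The restrictions $2\le p<2d/(d-2)$ (with the endpoint case $p=2d/(d-2)$, $r=1$) and $1\le r\le p/(p-2)$ appear exactly here: they are what is needed to control the $L^p$-product $\cP u\cdot\na\cP u$ and the mixed products $\cQ u\cdot\na\cP u$ etc. by the norms available, and to absorb the low-high interactions from the $L^2$ block into the $L^p$ block (this is where $p\le 4$, equivalently the Minkowski-type embedding $\dot B^{d/2-1}_{2,1}\hookrightarrow\dot B^{d/p-1}_{p,1}$ up to frequency $\wt\eps^{-1}$, is used). One must be careful that all the $\eps$- and $\nu$-dependent weights in $\|\cdot\|_{X^{p,r}_{\eps,\nu}}$ are consistent so that every nonlinear term is bounded by $\nu^{-1}$ times a product of two $X^{p,r}_{\eps,\nu}$-norms; then the smallness hypothesis $C_0^{\eps,\nu}\le\eta\nu$ closes the a priori estimate \eqref{eq:ue} and a standard fixed-point/compactness argument yields the global solution (continuity statements follow from the estimates and a density argument, with only weak continuity when $r=\infty$).

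For the convergence statements I would argue as in the classical ill-prepared-data theory: the uniform bound \eqref{eq:ue} and the presence of the factor $\wt\eps$ in front of $\|a\|^{h,\wt\eps}_{\wt L^\infty(\dot B^{d/p}_{p,1})}$ and $\nu$ in front of the dissipation terms force $\na a^\eps$ and $\div u^\eps=\div\cQ u^\eps$ to tend to $0$ in the sense of distributions (the acoustic part oscillates at frequency $\eps^{-1}$ and disperses or, on the torus, averages out), so $\cQ u^\eps\rightharpoonup0$ and, passing to the limit in \eqref{eq:NS}, $\cP u^\eps$ converges to the solution $v$ of \eqref{eq:ins}; the key point is that the quadratic term $\cP\div(\cQ u^\eps\otimes\cQ u^\eps)$ contributes nothing in the limit — either by Strichartz/dispersive estimates for the acoustic system on $\R^d$ (this is exactly where the $\wt\eps^{1/2-1/p}$, resp. $\wt\eps^{c(1/2-1/p)}$ in dimension two, convergence rates come from, obtained by interpolating the uniform $\wt L^2$ bound $\nu^{1/2}\|(a^\eps,\cQ u^\eps)\|_{\wt L^2(\dot B^{(d+1)/p-1/2}_{p,1})}\lesssim C_0^{\eps,\nu}\wt\eps^{1/2-1/p}$, itself a consequence of dispersion for the wave operator $\eps^{-1}|D|$, against the uniform energy bound), or, on $\T^d$, by a non-resonance analysis à la Schochet. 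The explicit rates then follow by writing the equation for $\cP u^\eps-v$, using the heat estimate \eqref{eq:heat} on it, and bounding the right-hand side (which contains $\cP\div((\cQ u^\eps+\text{errors})\otimes\cdots)$ and $(\cP u^\eps-v)$-linear terms absorbed by smallness) by $C_0^{\eps,\nu}\wt\eps^{1/2-1/p}$.

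\medbreak
\textbf{Main obstacle.} The delicate point is the nonlinear estimate of the mixed low-high frequency interactions between the $L^2$-based block $(a,\cQ u)^{\ell,\wt\eps}$ and the $L^p$-based blocks: one needs the high-frequency $L^p$ norms to be controlled by low-frequency $L^2$ norms (and vice versa) on the overlap region near frequency $\wt\eps^{-1}$, uniformly in $\eps$ and $\nu$, and to do so with weights compatible with the damped/parabolic gains — this is precisely what pins down the admissible range of $(p,r)$ and is the technical heart of the argument; by comparison the acoustic dispersion estimates yielding the rates are a more routine (if lengthy) application of stationary phase to the operator $\eps^{-1}|D|$.
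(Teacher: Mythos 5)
Your proposal follows essentially the same route as the paper: the same frequency splitting ($L^2$ energy estimates for the low frequencies of $(a,\cQ u)$, the effective velocity $w=\cQ u+(-\Delta)^{-1}\nabla a$ together with a damped transport equation for the high frequencies, plain heat estimates for $\cP u$ in $\dot B^{d/p-1}_{p,r}\cap\dot B^{-1}_{\infty,1}$), the same Bony-decomposition product estimates to close the a priori bound, and the same Strichartz-plus-interpolation argument for the convergence rates in $\R^d$. One small inaccuracy: since $k(\eps a)/\eps=k'(0)a+\eps a^2\wt k(\eps a)$, the pressure nonlinearity does \emph{not} gain a factor of $\eps$ --- its leading part $k'(0)\,a\nabla a$ is an $O(1)$ quadratic term, which is exactly how the paper treats it.
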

\noindent  Some remarks are in order:
 \begin{enumerate}
 \item  According to \cite{D6}, uniqueness holds true if $r=1$.
We conjecture that it also holds in the other cases but, to the best of our knowledge, 
 the question has not been addressed.
\item  The first part of the  theorem (the global existence issue) may be extended to  $2d/(d+2)\leq p<2$ and all $r\in[1,\infty]$  provided the following smallness condition is fulfilled: 
$$\|(a_0^\eps,\cQ u_0^\eps)\|^{\ell,\wt\eps}_{\dot B^{d/2-1}_{2,1}}+\|\cQ u_0^\eps\|^{h,\wt\eps}_{\dot B^{d/2-1}_{2,1}}+\|\cP u_0^\eps\|_{\dot B^{d/2-1}_{2,r}\cap \dot B^{-1}_{\infty,1}}+\wt\eps\|a_0^\eps\|^{h,\wt\eps}_{\dot B^{d/2}_{2,1}}\leq \eta \nu.$$
Indeed,  Theorem \ref{th:main1} provides a global small solution in $X^{2,r}_{\eps,\nu}.$ Therefore it is only a matter
of checking that the constructed solution has  additional regularity $X^{p,r}_{\eps,\nu}.$
This may be checked by following steps 3 and 4 of the proof below, knowing already that 
the solution is in $X^{2,r}_{\eps,\nu}.$ The condition that $2d/(d+2)\leq p$ comes from 
the part $u^{\ell,\wt\eps}\cdot\nabla a$ of the convection term in the mass equation, as $\nabla u^{\ell,\wt\eps}$
is only in $L^1(\R_+;\dot B^{d/2}_{2,1}),$ and the regularity to be transported is $\dot B^{d/p}_{p,1}.$ 
Hence we need to have $d/p\leq d/2+1$ (see e.g. Chap. 3 of \cite{BCD}). 
The same condition appears when handling $k(\eps a^\eps)\nabla a^\eps$. 

As we believe  the case $p<2$ to be somewhat  anecdotic (it is just  a regularity result), we decided to concentrate on the case $p\geq2$ in the rest of the paper. 
\item 
We can afford  source terms in the mass and velocity equations, the regularity of which is given by 
scaling considerations. 
\item 
 We also expect  results in the same spirit (but only local-in-time) to be provable for large data, 
 as in \cite{D3,D4}.
 \item 
To keep the paper a reasonable size, we also refrained to establish more accurate convergence results
in the case of periodic boundary conditions, based on Schochet's filtering method (see \cite{D4} for 
more details on that issue if $p=2$). 
\end{enumerate}


\section{The proof of global existence for fixed $\eps$ and $\nu$}\label{s:glob}

Recall that  $\nu:=\lambda+2\mu.$ 
Performing the change of unknowns
\begin{equation}\label{eq:change}
(a,u)(t,x):=\eps (a^\eps,u^\eps)(\eps^2\nu t,\eps\nu x)
\end{equation}
and the change of data
\begin{equation}\label{eq:changedata}
(a_0,u_0)(x):= \eps(a^\eps_0,u^\eps_0)(\eps\nu x)
 \end{equation}
reduces the proof of the global existence to the case   $\nu=1$ and $\eps=1.$ 
So in the rest of this section, we assume that $\eps=\nu=1,$ and simply denote 
 \begin{eqnarray}
\label{eq:simdecompo}  &z^{\ell}:=z^{\ell,1}\quad\mbox{and}\quad z^{h}:=z^{h, 1}, \\
\label{simpdecomp}& \|z\|^{\ell}_{\dot B^{\sigma}_{p,r}}:=  \|z^{\ell,1}\|_{\dot B^{\sigma}_{p,r}} \ \hbox{ and }\   \|z\|^{h}_{\dot B^{\sigma}_{p,r}}:= \|z^{h,1}\|_{\dot B^{\sigma}_{p,r}}. \end{eqnarray}
The threshold between low and high frequencies will be set  at $2^{j_0}$ for some large enough nonnegative integer $j_0$
depending only on $d,$ $k,$ $\wt\mu/\nu$ and $\wt\lambda/\nu.$ 
\smallbreak

Resuming to the original variables will yield the desired uniform estimate \eqref{eq:ue}
under Condition \eqref{eq:smalldata1}. Indeed, we notice that we have
up to some harmless constant:
$$\displaylines{
\|(a_0^\eps,\cQ u_0^\eps)\|^{\ell,\wt\eps}_{\dot B^{d/2-1}_{2,1}}+\|\cQ u_0^\eps\|^{h,\wt\eps}_{\dot B^{d/p-1}_{p,1}}+\|\cP u_0^\eps\|_{\dot B^{d/p-1}_{p,r}\cap \dot B^{-1}_{\infty,1}}
+\wt\eps\|a_0^\eps\|_{\dot B^{d/p}_{p,1}}\hfill\cr\hfill=\nu\bigl(\|(a_0,\cQ u_0)\|^\ell_{\dot B^{d/2-1}_{2,1}}
+\|\cQ u_0\|^h_{\dot B^{d/p-1}_{p,1}}+\|\cP u_0\|_{\dot B^{d/p-1}_{p,r}\cap \dot B^{-1}_{\infty,1}}+\|a_0\|_{\dot B^{d/p}_{p,1}}\bigr)}
$$
and
$$
\|(a^\eps,u^\eps)\|_{X^{p,r}_{\eps,\nu}}=\nu\|(a,u)\|_{X^{p,r}_{1,1}}.
$$

\subsection{A priori estimates of the solutions to system  \eqref{eq:NSC}}

In this paragraph, we concentrate on the proof of global estimates for a global smooth 
solution $(a,u)$ to the following system:
\begin{equation}
  \left\{
    \begin{aligned}
      & \d_t a+\div u=-\div (a u),\\
      &\d_t u + u\cdot\nabla u- \wt\cA u+\nabla  a=k(a)\nabla a-J(a)\wt\cA u
      \\&\hspace{4cm}+\frac1{1+a}\div\biggl(2\frac{\wt\mu(a)}\nu D(u)+\frac{\wt\lambda(a)}\nu\div u\Id\biggr),
        \end{aligned}
  \right.
  \label{eq:NSC}
\end{equation}
with $k,$ $\wt\lambda,$ $\wt\mu$ as above, $J(a):= a/(1+ a)$ and $\wt\cA :=\cA/\nu.$
\medbreak
To simplify the presentation we assume  the viscosity coefficients $\lambda$ and $\mu$ 
to be constant (i.e. the last line of the velocity equation in \eqref{eq:NSC}, is zero). The general case will be discussed at the end of the subsection.
\medbreak
Throughout we make the assumption that 
\begin{equation}\label{eq:smalla}
\sup_{t\in\R_+,\, x\in\R^d} |a(t,x)|\leq 1/2
\end{equation}
which will enable us to use freely the composition estimate stated in Proposition \ref{p:compo}. 
Note that as $\dot B^{d/p}_{p,1}\hookrightarrow L^\infty,$ Condition \eqref{eq:smalla} will be ensured by the
fact that the constructed solution has small norm in $X^{p,r}_{1,1}.$

\subsubsection*{Step 1: the incompressible part of the velocity}

Projecting the velocity equation onto the set of divergence free vector fields yields
$$
\d_t\cP u-\wt\mu\Delta\cP u=-\cP\bigl(J(a)\wt\cA u\bigr)-\cP(u\cdot\nabla u)\quad\hbox{with }\ \wt\mu:=\mu/\nu.
$$
Hence, using the estimates \eqref{eq:heat} for the heat equation, we get
\begin{eqnarray}\label{eq:Pu0}
&&\|\cP u\|_{\wt L^\infty(\dot B^{d/p-1}_{p,r})\cap \wt L^1(\dot B^{d/p+1}_{p,r})}\lesssim \|\cP u_0\|_{\dot B^{d/p-1}_{p,r}}
+\|\cP(J(a)\nabla^2 u)+\cP(u\cdot\nabla u)\|_{\wt L^1(\dot B^{d/p-1}_{p,r})}\\\label{eq:Pu1}
&&\|\cP u\|_{\wt L^\infty(\dot B^{-1}_{\infty,1})\cap L^1(\dot B^{1}_{\infty,1})}\lesssim \|\cP u_0\|_{\dot B^{-1}_{\infty,1}}
+\|\cP(J(a)\nabla^2 u)+\cP(u\cdot\nabla u)\|_{L^1(\dot B^{-1}_{\infty,1})}.
\end{eqnarray}
In order to bound the right-hand sides, we use the fact that the $0$-th order Fourier multiplier $\cP$
maps $\wt L^1(\dot B^{d/p-1}_{p,r})$ (or $L^1(\dot B^{-1}_{\infty,1})$) into itself. In addition,   classical product laws and 
Proposition \ref{p:compo}  give (if $p<2d$):
$$
\displaylines{
\|J(a)\nabla^2 u\|_{\wt L^1(\dot B^{d/p-1}_{p,r})}\lesssim \|a\|_{\wt L^\infty(\dot B^{d/p}_{p,1})}\|\nabla^2 u\|_{\wt L^1(\dot B^{d/p-1}_{p,r})},\cr
\|u\cdot\nabla u\|_{\wt L^1(\dot B^{d/p-1}_{p,r})}\lesssim \|u\|_{\wt L^\infty(\dot B^{d/p-1}_{p,r})}\|u\|_{\wt L^1(\dot B^{d/p+1}_{p,r})}.}
$$
Because, by Bernstein inequality,
\begin{equation}\label{eq:B}
\|a\|_{\dot B^{d/p}_{p,1}}\lesssim \|a^\ell\|_{\dot B^{d/p}_{p,1}} + \|a^h\|_{\dot B^{d/p}_{p,1}}
\lesssim 2^{j_0} \|a^\ell\|_{\dot B^{d/2-1}_{2,1}} + \|a^h\|_{\dot B^{d/p}_{p,1}},
\end{equation}
we deduce from \eqref{eq:Pu0} that
\begin{equation}\label{eq:Pua}
\|\cP u\|_{\wt L^\infty(\dot B^{d/p-1}_{p,r})\cap\wt L^1(\dot B^{d/p+1}_{p,r})}\lesssim \|\cP u_0\|_{\dot B^{d/p-1}_{p,r}}
+2^{j_0}\|(a,u)\|_{X^{p,r}_{1,1}}^2.
\end{equation}
Next, in order to bound the r.h.s. of \eqref{eq:Pu1}, we use Bony's decomposition (see \cite{Bony} and the definition in appendix):
$$
J(a)\nabla^2u=T_{J(a)}\nabla^2u+T_{\nabla^2 u}J(a)+R(J(a),\nabla^2u)
$$
and, with the summation convention over repeated indices,
\begin{equation}\label{eq:Puu}
(u\cdot\nabla u)^i=T_{u^j}\d_ju^i+T_{\d_ju^i}u^j+\d_jR((\cP u)^j,u^i)+R((\cQ u)^j,\d_ju^i)\quad\hbox{with }\ 
i=1,\cdots,d.
\end{equation}
On the one hand, $T$ maps $L^\infty\times\dot B^{-1}_{\infty,1}$
and $\dot B^{-1}_{\infty,1}\times L^\infty$ in $\dot B^{-1}_{\infty,1}$
while $R$ maps $\dot B^{d/p}_{p,1}\times\dot B^{d/p-1}_{p,\infty}$ in $\dot B^{d/p-1}_{p,1},$ if $2\leq p<2d.$
Hence, taking advantage of  functional embeddings (adapted to $\wt L^m(\dot B^s_{p,r})$ spaces),  
\begin{equation}\label{eq:Pu2}
\|J(a)\nabla^2u\|_{L^1(\dot B^{-1}_{\infty,1})}\lesssim \|a\|_{\wt L^\infty(\dot B^{d/p}_{p,1})}\|\nabla^2u\|_{\wt L^1(\dot B^{d/p-1}_{p,r})}.
\end{equation}
On the other hand, thanks to the fact that, if $2\leq p<2d,$
$$
\begin{array}{lll}
\|T_{u^j}\d_ju^i\|_{L^1(\dot B^{-1}_{\infty,1})}&\lesssim&\|u^j\|_{L^\infty(\dot B^{-1}_{\infty,1})}\|\d_ju^i\|_{L^1(\dot B^0_{\infty,\infty})}\\[1ex]
\|T_{\d_ju^i}u^j\|_{L^1(\dot B^{-1}_{\infty,1})}&\lesssim&\|\d_ju^i\|_{L^\infty(\dot B^{-2}_{\infty,1})}\|u^j\|_{L^1(\dot B^1_{\infty,\infty})}\\[1ex]
\|\d_jR((\cP u)^j,u^i)\|_{L^1(\dot B^{-1}_{\infty,1})}&\lesssim&\|(\cP u)^j\|_{\wt L^\infty(\dot B^{-1}_{\infty,1})}\|u^i\|_{\wt L^1(\dot B^{d/p+1}_{p,r})}\\[1ex]
\|R((\cQ u)^j,\d_ju^i)\|_{L^1(\dot B^{-1}_{\infty,1})}&\lesssim&\|(\cQ u)^j\|_{\wt L^\infty(\dot B^{d/p-1}_{p,1})}\|\d_ju^i\|_{\wt L^1(\dot B^{d/p}_{p,r})},
\end{array}
$$
we  get 
$$
\|u\cdot\nabla u\|_{L^1(\dot B^{-1}_{\infty,1})}
\lesssim \bigl(\|\cP u\|_{\wt L^\infty(\dot B^{-1}_{\infty,1})}+\|\cQ u\|_{\wt L^\infty(\dot B^{d/p-1}_{p,1})}\bigr)
\|u\|_{\wt L^1(\dot B^{d/p+1}_{p,r})}.
$$
Plugging this latter inequality and \eqref{eq:Pu2} in \eqref{eq:Pu1} and using Bernstein inequality, we end up with 
\begin{equation}\label{eq:Pub}
\|\cP u\|_{\wt L^\infty(\dot B^{ -1}_{\infty,1})\cap L^1(\dot B^{1}_{ \infty,1})}\lesssim \|\cP u_0\|_{\dot B^{ -1}_{\infty,1}}
+2^{j_0}\|(a,u)\|_{X^{p,r}_{1,1}}^2.
\end{equation}


\subsubsection*{Step 2: the low frequencies of $(a,\cQ u)$}

Throughout, we set $p^*=2p/(p-2)$ (that is $1/p+1/p^*=1/2$) and
$1/r^*=\theta/r+1-\theta$ with $\theta=p/2-1.$
Because $2\leq p\leq \min(4,2d/(d-2)),$ we have $\max(p,d)\leq p^*,$ and $r^*\in[1,r].$ 
We shall use repeatedly  the following facts, based on straightforward interpolation inequalities~:
\begin{itemize}
\item  The space
$\wt L^\infty(\dot B^{d/p-1}_{p,r})\cap\wt L^\infty(\dot B^{-1}_{\infty,1})$ is continuously embedded in 
$\wt L^\infty(\dot B^{d/p^*-1}_{p^*,r^*}),$ 
\item The space $\wt L^2(\dot B^{d/p}_{p,r})\cap\wt L^2(\dot B^{0}_{\infty,1})$ is continuously embedded in 
$\wt L^2(\dot B^{d/4}_{4,2})$ (here comes that $r\leq p/(p-2)$),
\item  We have $1/r^*+1/r\geq1$  (again, we use  that $r\leq p/(p-2)$),
\item If $p=d^*$ (that is $p=2d/(d-2)$) then $r=1$ by assumption, and thus $r^*=1,$ too.
\end{itemize}
\medbreak
Now, to estimate the low frequencies of $(a,\cQ u),$ we write that
\begin{equation}\label{eq:lfbaro}
  \left\{
    \begin{aligned}
      & \d_t a+\div\cQ u=-\div (a u),\\
      &\d_t\cQ u-\Delta\cQ u+\nabla a=-\cQ( u\cdot\nabla u)-\cQ (J(a)\wt\cA u)+k(a)\nabla a,
        \end{aligned}
  \right.
\end{equation}
and the energy estimates for the barotropic linearized equations (see \cite{BCD}, Prop. 10.23, or \cite{D1}) thus give
$$\displaylines{
\|(a,\cQ u)\|^\ell_{\wt L^\infty(\dot B^{d/2-1}_{2,1})\cap L^1(\dot B^{d/2+1}_{2,1})}
\lesssim \|(a_0,\cQ u_0)\|^\ell_{\dot B^{d/2-1}_{2,1}}
+\|\div (a u) \|^\ell_{L^1(\dot B^{d/2-1}_{2,1})}\hfill\cr\hfill+\| \cQ( u\cdot\nabla u)\|^\ell_{L^1(\dot B^{d/2-1}_{2,1})}
+\| \cQ (J(a)\wt\cA u)\|^\ell_{L^1(\dot B^{d/2-1}_{2,1})}+\|k(a)\nabla a\|^\ell_{L^1(\dot B^{d/2-1}_{2,1})}.}
$$
Let us first bound\footnote{We
do not get anything better by just considering the low frequencies of $\cQ(u\cdot\nabla u).$}  $u\cdot\nabla u$ in $L^1(\dot B^{d/2-1}_{2,1}).$
For that, we use again decomposition \eqref{eq:Puu}. To handle the first term of \eqref{eq:Puu}, 
we just use that (see \cite{BCD}, Chap. 2)
$$
T: \wt L^\infty(\dot B^{d/p^*-1}_{p^*,r^*})\times\wt L^1(\dot B^{d/p}_{p,r})\longrightarrow
L^1(\dot B^{d/2-1}_{2,1}).
$$
This is due to the fact that $1/p+1/p^*=1/2,$ and that either $d/p^*-1<0$ and $1/r+1/r^*\geq1,$ or $d/p^*-1=0$ and $r^*=1.$
As $T: \wt L^\infty(\dot B^{d/p^*-2}_{p^*,r^*})\times\wt L^1(\dot B^{d/p+1}_{p,r})\longrightarrow
L^1(\dot B^{d/2-1}_{2,1}),$ the second term of \eqref{eq:Puu} also satisfies quadratic estimates
with respect to the norm of the solution in $X^{p,r}_{1,1}.$
\smallbreak
Next, because 
$$
R:\dot B^{d/4}_{4,2}\times\dot B^{d/4}_{4,2}\longrightarrow\dot B^{d/2}_{2,1},
$$
we have
$$
\|\d_j R((\cP u)^j,u^i)\|_{L^1(\dot B^{d/2-1}_{2,1})}\lesssim\|\cP u\|_{L^2(\dot B^{d/4}_{4,2})}\| u\|_{L^2(\dot B^{d/4}_{4,2})}.
$$
For the  last  term of \eqref{eq:Puu}, we just have to use that
$$
R:\wt L^\infty(\dot B^{d/p-1}_{p,1})\times\wt L^1(\dot B^{d/p}_{p,r})\longrightarrow L^1(\dot B^{d/2-1}_{2,1})
\quad\hbox{for }\quad p\in[2,4]\cap[2,2d).
$$
Putting all the above informations together, we conclude that 
\begin{equation}\label{eq:Qu1}
\|u\cdot\nabla u\|_{L^1(\dot B^{d/2-1}_{2,1})}\lesssim \|(a,u)\|_{X^{p,r}_{1,1}}^2.
\end{equation}
In order to bound $(\div(au))^\ell,$ we notice that
\begin{equation}\label{eq:Qu2}
\bigl(\div(au)\bigr)^\ell=\bigl(\div(R(a,u)+T_{a}u)\bigr)^\ell+\div T_{u^\ell} a^\ell 
+\bigl(\div\bigl(\dot S_{j_0}u\,\dot\Delta_{j_0+1}a\bigr)\bigr)^\ell.
\end{equation}
Now, the remainder $R$ and the paraproduct $T$  map $\wt L^\infty(\dot B^{d/p^*-1}_{p^*,1})\times\wt L^1(\dot B^{d/p+1}_{p,r})$ 
in $L^1(\dot B^{d/2}_{2,1})$ and we have $\wt L^\infty(\dot B^{d/p-1}_{p,1})\hookrightarrow \wt L^\infty(\dot B^{d/p^*-1}_{p^*,1})$ because $p^*\geq p.$
Hence 
$$
\|\div(R(a,u)+T_{a}u)\|_{L^1(\dot B^{d/2-1}_{2,1})}\lesssim  \|a\|_{\wt L^\infty(\dot B^{d/p-1}_{p,1})}
\|u\|_{\wt L^1(\dot B^{d/p+1}_{p,r})}.
$$
To handle the third term of \eqref{eq:Qu2}, it suffices to use the fact that 
$$
T:L^2(L^\infty)\times L^2(\dot B^{d/2}_{2,1})\longrightarrow L^1(\dot B^{d/2}_{2,1}).
$$
Finally, 
$$\begin{array}{lll}
\|\dot S_{j_0}u\,\dot\Delta_{j_0+1}a\|_{L^1(L^2)}&\!\!\!\leq\!\!\!& \|\dot S_{j_0}u\|_{L^\infty(L^{p^*})}
\|\dot\Delta_{j_0+1}a\|_{L^1(L^p)}\\[1ex]
&\!\!\!\lesssim\!\!\!& 2^{j_0(1-d/p^*)}\|u\|_{\wt L^\infty(\dot B^{d/p^*-1}_{p^*,r})}\bigl(2^{j_0d/p}\|\dot\Delta_{j_0+1}a\|_{L^1(L^p)}\bigr)2^{-j_0d/p}.
\end{array}$$
Hence 
\begin{equation}\label{eq:Qu2a}
2^{j_0d/2}\|\dot S_{j_0}u\,\dot\Delta_{j_0+1}a\|_{L^1(L^2)}\lesssim 2^{j_0} \|u\|_{\wt L^\infty(\dot B^{d/p^*-1}_{p^*,r})}\|a\|_{L^1(\dot B^{d/p}_{p,1})}^h.
 \end{equation}
We can thus conclude that 
\begin{equation}\label{eq:Qu3}
\|\div(au)\|^\ell_{L^1(\dot B^{d/2-1}_{2,1})}\lesssim 2^{j_0}\|(a,u)\|_{X^{p,r}_{1,1}}^2.
\end{equation}
Next, denoting $\cQ^\ell:=\dot S_{j_0+1}\cQ,$ we write that
\begin{equation}\label{eq:Qu4}
\cQ^\ell(J(a)\wt\cA u)=\cQ^\ell\bigl(T_{\wt\cA u} J(a)+ R(\wt\cA u,J(a))\bigr)+T_{J(a)}\Delta\cQ^\ell u
+[\cQ^\ell,T_{J(a)}]\wt\cA u.
\end{equation}
To handle the first two terms, it suffices to notice that 
\begin{equation}\label{eq:Qu4a}
R \ \hbox{ and }\ T \ \hbox{ map }\ \wt L^\infty(\dot B^{d/p^*-1}_{p^*,r^*})\times\wt L^1(\dot B^{d/p}_{p,1})
\ \hbox{ to }\  L^1(\dot B^{d/2-1}_{2,1}),
\end{equation}
and to use Proposition \ref{p:compo}.
Therefore, by virtue of \eqref{eq:B},
$$
\|T_{\wt\cA u} J(a)+ R(\wt\cA u,J(a))\|_{L^1(\dot B^{d/2-1}_{2,1})}\lesssim 2^{j_0}\|(a,u)\|_{X^{p,r}_{1,1}}^2.
$$
For the third term, we just have to use that $T:L^\infty\times\dot B^{d/2-1}_{2,1}\to\dot B^{d/2-1}_{2,1}.$
Finally the commutator term may be handled according to Lemma \ref{l:com}, which ensures 
that\footnote{Recall that $r=1$ if $p^*=d.$} 
$$
\|[\cQ^\ell,T_{J(a)}]\wt\cA u\|_{L^1(\dot B^{d/2-1}_{2,1})}\lesssim \|\nabla J(a)\|_{\wt L^\infty(\dot B^{d/p^*-1}_{p^*,1})}\|\nabla^2u\|_{\wt L^1(\dot B^{d/p-1}_{p,r})}.$$
Hence using embeddings and  composition estimates, we end up with 
\begin{equation}\label{eq:Qu5}
\|\cQ(J(a)\wt\cA u)\|^\ell_{L^1(\dot B^{d/2-1}_{2,1})}\lesssim 2^{j_0}\|(a,u)\|_{X^{p,r}_{1,1}}^2.
\end{equation}
Finally, we decompose $k(a)\nabla a$ as follows:
$$
\bigl(k(a)\nabla a\bigr)^\ell=\bigl(T_{\nabla a}k(a)+R(\nabla a,k(a))\bigr)^\ell+ T_{(k(a))^\ell}\nabla a^\ell
+\bigl(\dot S_{j_0}k(a)\dot\Delta_{j_0+1}\nabla a\bigr)^\ell.
$$
To bound the first two terms, we use again \eqref{eq:Qu4a} and composition estimates. 
For the third term, we use that $T:L^2(L^\infty)\times L^2(\dot B^{d/2-1}_{2,1})\to L^1(\dot B^{d/2-1}_{2,1}).$
For the last term, we proceed as in \eqref{eq:Qu2a} and get
$$
2^{j_0(d/2-1)}\|\dot S_{j_0} k(a)\,\dot\Delta_{j_0+1}\nabla a\|_{L^2}\lesssim 
2^{j_0}\|k(a)\|_{\dot B^{d/p^*-1}_{p^*,1}}\|a\|_{\dot B^{d/p}_{p,1}}^h.
$$
Therefore, by embedding,
$$
2^{j_0(d/2-1)}\|\dot S_{j_0} k(a)\,\dot\Delta_{j_0+1}\nabla a\|_{L^1(L^2)}\lesssim 
2^{j_0}\|k(a)\|_{L^\infty(\dot B^{d/p-1}_{p,1})}\|a\|_{L^1(\dot B^{d/p}_{p,1})}^h.
$$
For bounding $k(a),$ one cannot use directly Proposition \ref{p:compo}  as  it may happen that $d/p-1<0.$
So  we  write
$$
k(a)=k'(0)\,a+a\wt k(a)\quad\hbox{with }\ \wt k(0)=0.
$$
Now, combining   Proposition \ref{p:compo} and product laws in Besov spaces, we get for $2\leq p<2d,$
\begin{equation}\label{eq:comp}
\|k(a)\|_{\dot B^{d/p-1}_{p,1}}\lesssim  (|k'(0)|+\|a\|_{\dot B^{d/p}_{p,1}})\|a\|_{\dot B^{d/p-1}_{p,1}}.
\end{equation}
So finally, 
\begin{equation}\label{eq:Qu6}
\|k(a)\nabla a\|_{L^1(\dot B^{d/2-1}_{2,1})}\lesssim 2^{j_0} (1+\|a\|_{L^\infty(\dot B^{d/p}_{p,1})})\|(a,u)\|_{X^{p,r}_{1,1}}^2.
\end{equation}
Putting together Inequalities \eqref{eq:Qu1}, \eqref{eq:Qu3}, \eqref{eq:Qu5} and \eqref{eq:Qu6},  we conclude that
\begin{equation}\label{eq:lf}
\|(a,\cQ u)\|^\ell_{\wt L^\infty(\dot B^{d/2-1}_{2,1})\cap L^1(\dot B^{d/2+1}_{2,1})}\lesssim \|(a_0,\cQ u_0)\|^\ell_{\dot B^{d/2-1}_{2,1}}
+2^{j_0}(1+\|a\|_{\wt L^\infty(\dot B^{d/p}_{p,1})})\|(a,u)\|_{X^{p,r}_{1,1}}^2.
\end{equation}

\subsubsection*{Step 3: Effective velocity}

To estimate the high frequencies of $\cQ u,$ we follow the approach of \cite{Haspot,Haspot2,Haspot3}, and  introduce
 the following ``effective'' velocity field\footnote{The idea is to write the term $\Delta\cQ u -\nabla a$ in
 \eqref{eq:lfbaro} as the Laplacian of some gradient-like vector-field.}:
$$
w:= \cQ u+(-\Delta)^{-1}\nabla a.
$$
We find out that
$$
\d_tw-\Delta w=-\cQ(u\cdot\nabla u)-\cQ(J(a)\wt\cA u)+k(a)\nabla a+\cQ(au)+w-(-\Delta)^{-1}\nabla a.
$$
Applying the heat estimates \eqref{eq:heat} for the high frequencies of $w$ only, we get
$$\displaylines{
\|w\|^h_{\wt L^\infty(\dot B^{d/p-1}_{p,1})\cap L^1(\dot B^{d/p+1}_{p,1})}\lesssim \|w_0\|^h_{\dot B^{d/p-1}_{p,1}}
+\|u\cdot\nabla u\|^h_{L^1(\dot B^{d/p-1}_{p,1})}+\|\cQ(J(a)\wt\cA u)\|^h_{L^1(\dot B^{d/p-1}_{p,1})}\hfill\cr\hfill+\|k(a)\nabla a\|^h_{L^1(\dot B^{d/p-1}_{p,1})}
+\|\cQ(au)\|^h_{L^1(\dot B^{d/p-1}_{p,1})}+\|w\|^h_{L^1(\dot B^{d/p-1}_{p,1})}+\|a\|^h_{L^1(\dot B^{d/p-2}_{p,1})}.}
$$
The important point is that, owing to the high frequency cut-off at $|\xi|\sim 2^{j_0},$
$$
\|w\|^h_{L^1(\dot B^{d/p-1}_{p,1})}\lesssim 2^{-2j_0}\|w\|^h_{L^1(\dot B^{d/p+1}_{p,1})}
\quad\hbox{and}\quad
\|a\|^h_{L^1(\dot B^{d/p-2}_{p,1})}\lesssim  2^{-2j_0}\|a\|^h_{L^1(\dot B^{d/p}_{p,1})}.
$$
Hence, if $j_0$ is large enough then the term $\|w\|^h_{L^1(\dot B^{d/p-1}_{p,1})}$ may be absorbed by the l.h.s.
The other terms satisfy quadratic estimates. Indeed, it is clearly the case of $u\cdot\nabla u$
according to \eqref{eq:Qu1}, for $\dot B^{d/2-1}_{2,1}$ embeds in $\dot B^{d/p-1}_{p,1}.$
Next, because the product maps $\dot B^{d/p}_{p,1}\times\dot B^{d/p-1}_{p,1}$ in $\dot B^{d/p-1}_{p,1},$ we have 
if $p<2d,$
$$
\|k(a)\nabla a\|_{L^1(\dot B^{d/p-1}_{p,1})}\lesssim\|a\|_{L^2(\dot B^{d/p}_{p,1})}^2.
$$
To handle $\cQ(J(a)\wt\cA u),$ we decompose it into
$$
\cQ(J(a)\wt\cA u)=T_{J(a)}\Delta\cQ u+\cQ R(J(a),\wt Au)+\cQ T_{\wt\cA u}J(a)+[\cQ,T_{J(a)}]\wt\cA u.
$$
Arguing as from proving \eqref{eq:Qu5}, we readily get 
$$
\|\cQ(J(a)\wt\cA u)\|_{L^1(\dot B^{d/p-1}_{p,1})}\lesssim\|a\|_{\wt L^\infty(\dot B^{d/p}_{p,1})}(\|\cQ u\|_{L^1(\dot B^{d/p+1}_{p,1})}+\| u\|_{\wt L^1(\dot B^{d/p+1}_{p,r})}).
$$
Finally, using Bony's decomposition, we see that
$$
\|au\|_{\wt L^1(\dot B^{d/p}_{p,r})}\lesssim\|a\|_{\wt L^2(\dot B^{d/p}_{p,1})}\|u\|_{\wt L^2(\dot B^{d/p}_{p,r}\cap \dot B^0_{\infty,1})}.
$$
Because 
$$\|\cQ(au)\|^h_{L^1(\dot B^{d/p-1}_{p,1})}\lesssim \|au\|_{\wt L^1(\dot B^{d/p}_{p,r})},
$$
we conclude that
\begin{equation}\label{eq:wh}
\|w\|^h_{\wt L^\infty(\dot B^{d/p-1}_{p,1})\cap L^1(\dot B^{d/p+1}_{p,1})}\lesssim \|w_0\|^h_{\dot B^{d/p-1}_{p,1}}
+2^{j_0}\|(a,u)\|_{X^{p,r}_{1,1}}^2+ 2^{-2j_0}\|a\|^h_{L^1(\dot B^{d/p}_{p,1})}.
\end{equation}

\subsubsection*{Step 4: High frequencies of the density}

We notice that
$$
\d_ta+u\cdot\nabla a+a=-a\,\div u-\div w.
$$
To bound the high frequencies of $a,$ we write that for all $j\geq j_0,$
$$
\d_t\ddj a+\dot S_{j-1}u\cdot\nabla\ddj a+\ddj a=-\ddj(T_{\nabla a}\cdot u+R(\nabla a,u)+a\,\div u+\div w)+R_j
$$
with $R_j:=\dot S_{j-1}u\cdot\nabla\ddj a-\ddj(T_u\cdot\nabla a).$
\medbreak
Arguing  as in \cite{D5}, we thus get for all $t\geq0,$
\begin{multline}\label{eq:ah0}
\|\ddj a(t)\|_{L^p}+\int_0^t\|\ddj a\|_{L^p}\,d\tau\leq\|\ddj a_0\|_{L^p}
+\frac1p\int_0^t\|\div\dot S_{j-1}u\|_{L^\infty}\|\ddj a\|_{L^p}\,d\tau
\\+\int_0^t\|\ddj(T_{\nabla a}\cdot u+R(\nabla a,u)+a\,\div u+\div w)\|_{L^p}\,d\tau
+\int_0^t\|R_j\|_{L^p}\,d\tau.
\end{multline}
Now, because $\dot B^{d/p}_{p,1}$ is an algebra, we may write
$$
\|a\,\div u\|_{\dot B^{d/p}_{p,1}}\lesssim \|a\|_{\dot B^{d/p}_{p,1}}\|\div u\|_{\dot B^{d/p}_{p,1}},
$$
and continuity results for the paraproduct, and remainder yield
$$\begin{array}{lll}
\|T_{\nabla a}\cdot u\|_{L^1(\dot B^{d/p}_{p,1})}&\lesssim& \|\nabla a\|_{\wt L^\infty(\dot B^{-1}_{\infty,1})}\|u\|_{\wt L^1(\dot B^{d/p+1}_{p,r})},\\[1ex]
\|R(\nabla a,u)\|_{L^1(\dot B^{d/p}_{p,1})}&\lesssim& \|\nabla a\|_{\wt L^\infty(\dot B^{d/p-1}_{p,1})}\|u\|_{\wt L^1(\dot B^{d/p+1}_{p,r})}.
\end{array}
$$
Finally,  because 
$$
R_j=\ddj\sum_{|j'-j|\leq 4}(\dot S_{j-1}-\dot S_{j'-1})u\cdot\nabla\dot\Delta_{j'}a+\sum_{|j'-j|\leq4}[\dot S_{j-1}u,\ddj]\cdot\nabla\dot\Delta_{j'}a,
$$
commutator estimates from \cite{BCD} lead to 
$$
\sum_{j\in\Z}2^{jd/p}\|R_j\|_{L^p}\leq C\|\nabla u\|_{L^\infty}\|a\|_{\dot B^{d/p}_{p,1}}.
$$
Multiplying \eqref{eq:ah0} by $2^{jd/p},$ using the above inequalities, and summing up over $j\geq j_0$ thus leads to
$$\displaylines{
\|a\|^h_{\wt L^\infty_t(\dot B^{d/p}_{p,1})}+\int_0^t\|a\|^h_{\dot B^{d/p}_{p,1}}\,d\tau \leq \|a_0\|^h_{\dot B^{d/p}_{p,1}}+ 
C\int_0^t\bigl(\|\nabla u\|_{L^\infty}+\|\div u\|_{\dot B^{d/p}_{p,1}}\bigr)\|a\|_{\dot B^{d/p}_{p,1}}\,d\tau\hfill\cr\hfill
+C \|\nabla a\|_{\wt L^\infty(\dot B^{d/p-1}_{p,1})}\|u\|_{\wt L^1(\dot B^{d/p+1}_{p,r})}
+C\|w\|^h_{L^1(\dot B^{d/p+1}_{p,1})}.}
$$
Therefore,
\begin{equation}\label{eq:ah}
\|a\|^h_{L^1(\dot B^{d/p}_{p,1})\cap\wt L^\infty(\dot B^{d/p}_{p,1})}\lesssim \|a_0\|^h_{\dot B^{d/p}_{p,1}}+2^{j_0}\|(a,u)\|_{X^{p,r}_{1,1}}^2+\|w\|^h_{L^1(\dot B^{d/p+1}_{p,1})}.
\end{equation}
Plugging \eqref{eq:wh} in \eqref{eq:ah} and taking $j_0$ large enough, we thus get
\begin{equation}\label{eq:ahbis}
\|a\|^h_{L^1(\dot B^{d/p}_{p,1})\cap\wt L^\infty(\dot B^{d/p}_{p,1})}\lesssim \|a_0\|^h_{\dot B^{d/p}_{p,1}}+\|\cQ u_0\|^h_{\dot B^{d/p-1}_{p,1}}+2^{j_0}\|(a,u)\|_{X_{1,1}^{p,r}}^2.
\end{equation}

\subsubsection*{Step 5: Closing the a priori estimates}

Resuming to \eqref{eq:wh} yields
\begin{equation}\label{eq:whbis}
\|w\|^h_{\wt L^\infty(\dot B^{d/p-1}_{p,1})\cap L^1(\dot B^{d/p+1}_{p,1})}\lesssim  \|a_0\|^h_{\dot B^{d/p}_{p,1}}+\|\cQ u_0\|^h_{\dot B^{d/p-1}_{p,1}}
+2^{j_0}\|(a,u)\|_{X^{p,r}_{1,1}}^2.
\end{equation}
As $\cQ u^h=w^h-(-\Delta)^{-1}\nabla a^h,$ the same inequality holds true for $\cQ u^h.$
Finally, putting together \eqref{eq:Pua}, \eqref{eq:Pub}, \eqref{eq:lf} and \eqref{eq:ahbis}, we conclude that
\begin{multline}\label{eq:end}
\|(a,u)\|_{X^{p,r}_{1,1}}\leq C\bigl(\|(a_0,\cQ u_0)\|^\ell_{\dot B^{d/2-1}_{2,1}}+\|\cP u_0\|_{\dot B^{d/p-1}_{p,r}\cap \dot B^{-1}_{\infty,1}}+\|a_0\|^h_{\dot B^{d/p}_{p,1}}\\+\|\cQ u_0\|^h_{\dot B^{d/p-1}_{p,1}}+2^{j_0}(1+\|(a,u)\|_{X^{p,r}_{1,1}})
\|(a,u)\|_{X^{p,r}_{1,1}}^2\bigr)\cdotp
\end{multline}
It is now easy to close the estimates if the data are small enough; we end up with \eqref{eq:ue}.

\subsubsection*{Step 6: The case of nonconstant viscosity coefficients}

It is only a matter of checking that the last line of \eqref{eq:NSC} satisfies quadratic estimates.
To this end, we write that
\begin{equation}\label{eq:add1}
\frac1{1+a}\div\bigl(\wt\mu(a) D(u)\bigr)=\frac{\wt\mu(a)}{1+a}\div D(u)
+\frac{\wt\mu'(a)}{1+a}D(u)\cdot\nabla a,
\end{equation}
and a similar relation for the term pertaining to $\wt\lambda.$
\medbreak
The first term of the r.h.s. of \eqref{eq:add1} may be handled exactly as $J(a)\wt\cA u.$ As for the second term, 
it suffices to estimate it in $L^1(\dot B^{d/p-1}_{p,1})$ and to show that applying $\cQ^\ell$ to it
leads to estimates in $L^1(\dot B^{d/2-1}_{2,1}).$ 
\smallbreak

Throughout, we use the fact  that 
$\frac{\wt\mu'(a)}{1+a}\nabla a=\nabla(L(a))$ for some smooth function $L$ vanishing at $0.$
Now, continuity properties of $R$ and $T$ imply that
$$
\begin{array}{rcl}
\|T_{\nabla(L(a))}\nabla u\|_{L^1(\dot B^{d/2-1}_{2,1})}&\lesssim& \|\nabla(L(a))\|_{\wt L^\infty(\dot B^{d/p^*-1}_{p^*,1})}
\|\nabla u\|_{\wt L^1(\dot B^{d/p}_{p,r})},\\[1ex]
\|R(\nabla(L(a)),\nabla u)\|_{L^1(\dot B^{d/2-1}_{2,1})}&\lesssim& \|\nabla(L(a))\|_{\wt L^\infty(\dot B^{d/p-1}_{p,1})}
\|\nabla u\|_{\wt L^1(\dot B^{d/p}_{p,r})},\\[1ex]
\|T_{\nabla u}\nabla(L(a))\|_{L^1(\dot B^{d/p-1}_{p,1})}&\lesssim& \|\nabla u\|_{L^\infty(L^\infty)}
\|\nabla(L(a))\|_{L^\infty(\dot B^{d/p-1}_{p,1})},
\end{array}
$$
which in particular yields quadratic estimates for the  $L^1(\dot B^{d/p-1}_{p,1})$ norm,
after using suitable embedding and the composition estimate \eqref{eq:Qu6}.

To complete the proof, it is only a matter of getting quadratic estimates for $\cQ^\ell(T_{\nabla u}\nabla L(a))$
in $L^1(\dot B^{d/2-1}_{2,1}).$ To this end, we observe that
$$
\|\cQ^\ell(T_{\nabla u}\nabla L(a))\|_{L^1(\dot B^{d/2-1}_{2,1})}\lesssim 
2^{j_0}\|\cQ^\ell(T_{\nabla u}\nabla L(a))\|_{L^1(\dot B^{d/2-2}_{2,1})}, $$
and thus 
$$\begin{array}{lll}
\|\cQ^\ell(T_{\nabla u}\nabla L(a))\|_{L^1(\dot B^{d/2-1}_{2,1})}&\!\!\!\lesssim\!\!\!& 
2^{j_0}\|\nabla u\|_{\wt L^2(\dot B^{d/p^*-1}_{p^*,r})}\|\nabla(L(a))\|_{\wt L^2(\dot B^{d/p-1}_{p,1})}\\[1ex]
&\!\!\!\lesssim\!\!\!&2^{j_0}\|u\|_{\wt L^2(\dot B^{d/p^*}_{p^*,r})}\|a\|_{\wt L^2(\dot B^{d/p}_{p,1})}.
\end{array}$$
Therefore we end up with 
$$
\biggl\|\frac{\wt\mu'(a)}{1+a}D(u)\cdot\nabla a\biggr\|_{L^1(\dot B^{d/p-1}_{p,1})}
\lesssim 2^{j_0}(1+\|a\|_{L^\infty(\dot B^{d/p}_{p,1})})\|(a,u)\|_{X^{p,r}_{1,1}}^2,
$$
and one may conclude that \eqref{eq:end} is still fulfilled in this more general situation.


\subsection{Existence of a global solution to System \eqref{eq:NSC}}
Let us now give a few words on the existence issue. 
The simplest way is to smooth out the initial velocity $u_0$ into
a sequence of initial velocities $(u_0^n)_{n\in\N}$  with $(u_0^n)^\ell$ in $\dot B^{d/2-1}_{2,1}$
uniformly, and $(u_0^n)^h$ in $\dot B^{d/p-1}_{p,1}.$
Then using the results of \cite{D0,D6} yields a unique local-in-time solution $(a^n,u^n)$ to 
\eqref{eq:NSC} with data $(a_0,u_0^n).$ From the above estimates, 
we know in addition that (with obvious notation)
$$
\|(a^n,u^n)\|_{X^{p,r}_{1,1}(0,t)}\leq C\bigl(\|(a_0^n,\cQ u_0^n)\|^\ell_{\dot B^{d/2-1}_{2,1}}+\|\cP u_0^n\|_{\dot B^{d/p-1}_{p,r}\cap \dot B^{-1}_{\infty,1}}+
\|a_0^n\|^h_{\dot B^{d/p}_{p,1}}+\|\cQ u_0^n\|^h_{\dot B^{d/p-1}_{p,1}}\bigr)
$$
is fulfilled whenever $t$ is smaller than the lifespan $T^n_*$ of $(a^n,u^n).$
As the above inequality implies that 
$$
\|a^n\|_{\wt L^\infty_{T_*^n}(\dot B^{d/p}_{p,1})}+\int_0^{T_*^n}\|\nabla u^n\|_{L^\infty}\,dt<\infty,
$$
a straightforward adaptation of Prop. 10.10 of \cite{BCD} to $p\not=2$ implies
that $T_*^n=+\infty.$ {} We thus have  for all $n\in\N,$ 
$$
\|(a^n,u^n)\|_{X^{p,r}_{1,1}}\leq C\bigl(\|(a_0,\cQ u_0)\|^\ell_{\dot B^{d/2-1}_{2,1}}+\|\cP u_0\|_{\dot B^{d/p-1}_{p,r}\cap \dot B^{-1}_{\infty,1}}+
\|a_0\|^h_{\dot B^{d/p}_{p,1}}+\|\cQ u_0\|^h_{\dot B^{d/p-1}_{p,1}}\bigr).
$$
Next,  compactness arguments similar to those of e.g. \cite{BCD} or \cite{D0} allow to conclude
that $(a^n,u^n)_{n\in\N}$ weakly converges (up to extraction) to some global solution of \eqref{eq:NSC} with 
the desired regularity properties, and satisfying \eqref{eq:ue} (with $\eps=\nu=1$ of course). 
Resuming to the original unknowns completes the proof of the first part of Theorem \ref{th:main1}. 



\section{The incompressible limit: weak convergence}

Granted with the uniform estimates established in the previous section,
it is now possible to pass to the limit in the system in the sense of distributions.
As in  the work by P.-L. Lions and N. Masmoudi \cite{LM}
dedicated to the finite energy weak solutions of \eqref{eq:NS}, the proof relies on compactness arguments, 
and works  the same in the $\R^d$ and $\T^d$ cases.
To simplify the presentation, we assume that 
the viscosity functions $\lambda$ and $\mu$ are constant. 
\medbreak
So we consider a family $(a_0^\eps,u_0^\eps)$ of data satisfying \eqref{eq:smalldata1}
and $\cP u^\eps_0\rightharpoonup v_0$ when $\eps$ goes to $0.$
We denote by $(a^\eps,u^\eps)$ the corresponding  solution of \eqref{eq:NS}
given by Theorem \ref{th:main1}.
Because
\begin{equation}\label{eq:conv0}
\|a_0^{\eps}\|_{\dot B^{d/p-1}_{p,1}}^{h,\wt\eps}\lesssim \wt\eps\|a_0^{\eps}\|_{\dot B^{d/p}_{p,1}}^{h,\wt\eps},
\end{equation}
the data  $(a^{\eps}_0,u_{0}^{\eps})$ are uniformly bounded
in $\dot B^{d/p-1}_{p,1}\times (\dot B^{d/p}_{p,r}\cap\dot B^{-1}_{\infty,1}),$ 
and thus in $\dot B^{d/4-1}_{4,2}.$ 
Likewise, \eqref{eq:ue} ensures that 
  $(a^{\eps},u^{\eps})$ is bounded in the space  $\wt\cC_b(\R_+;\dot B^{d/4-1}_{4,2}),$
  given our assumptions on $p$ and $r.$
Therefore there exists a sequence $(\eps_n)_{n\in\N}$ decaying to $0$ so that
$$
(a^{\eps_n}_0,u_0^{\eps_n})\rightharpoonup (a_0,u_0)
\quad\!\!\hbox{in}\!\!\quad \dot B^{d/4-1}_{4,2}\quad\hbox{and}\quad
(a^{\eps_n},u^{\eps_n})\rightharpoonup (a,u)
\quad\!\!\hbox{in}\!\!\quad L^\infty(\R_+;\dot B^{d/4-1}_{4,2})\quad\hbox{weakly}\ *.
$$
Of course, we have $\cP u_0= v_0.$
\medbreak
The strong convergence of the density to $1$  is obvious: we have
$\rho^{\eps_n}=1+\eps_na^{\eps_n},$
and  $(a^{\eps_n})_{n\in\N}$ is bounded (in $L^2(\R_+;\dot B^{d/p}_{p,1})$ for instance).
\medbreak
In order to justify  that $\div u=0,$ we rewrite the  mass equation as follows:
$$
\div u^{\eps_n}=-\eps_n\div(a^{\eps_n} u^{\eps_n})-\eps_n\d_ta^{\eps_n}.
$$
Given that $a^{\eps_n}$ and $u^{\eps_n}$ are bounded in $L^2(\R_+;\dot B^{d/4}_{4,2}\cap L^\infty)$
(use the definition of $X^{p,r}_{\eps,\nu}$ and interpolation), 
the first term in the right-hand side is  $\cO(\eps_n)$ in $L^1(\R_+;\dot B^{d/4-1}_{4,2}).$
As for the last term, it tends to $0$ in the sense of distributions, for $a^{\eps_n}\rightharpoonup a$ 
in $L^\infty(\R_+;\dot B^{d/4-1}_{4,2})$ weakly $*.$
We thus have $\div u^{\eps_n}\rightharpoonup 0,$ whence $\div u=0.$
\medbreak
To complete the proof of the weak convergence, 
it is only a matter of justifying that $u^{\eps_n}$  converges
in the sense of distributions to the solution $u$ of \eqref{eq:ins}.
To achieve it, we project the velocity equation onto divergence-free vector fields, and get
\begin{equation}\label{eq:conv1}
\d_t\cP u^{\eps_n}-\mu\Delta\cP u^{\eps_n}=-\cP( u^{\eps_n}\cdot\nabla u^{\eps_n})
-\cP\bigl(J(\eps_na^{\eps_n})\cA u^{\eps_n}\bigr).
\end{equation}
Because $\cQ u=0,$ the left-hand side weakly converges to $\d_t u-\mu\Delta u.$
\smallbreak
To prove that the last term tends to $0,$ we use the fact that having $\wt\eps (a^{\eps})^{h,\wt\eps}$
and $(a^{\eps})^{\ell,\wt\eps}$ bounded in $\wt L^\infty(\dot B^{d/p}_{p,1})$ and $\wt L^\infty(\dot B^{d/p-1}_{p,1}),$ respectively, implies that, for all $\alpha\in[0,1],$
\begin{equation}\label{eq:aa}
\wt\eps^\alpha a^{\eps}\quad\hbox{is bounded in }\ \wt L^\infty(\dot B^{d/p-1+\alpha}_{p,1}).
\end{equation}
Now  $\cA u^\eps$ is bounded in $\wt L^1(\dot B^{d/p-1}_{p,r})$ and   $p<2d.$ 
Hence,  according to product laws in Besov spaces, composition inequality and \eqref{eq:aa}, we get
   $J(\eps a^{\eps})\cA u^{\eps}=\cO(\wt\eps^{1-\alpha})$
in $\wt L^1(\dot B^{d/p-2+\alpha}_{p,r}),$ whenever $2\max(0,1-d/p)<\alpha\leq1.$
\smallbreak
In order to  prove that $\cP(u^{\eps_n}\cdot\nabla u^{\eps_n})\rightharpoonup \cP(u\cdot\nabla u),$ we  note that
$$u^{\eps_n}\cdot\nabla u^{\eps_n}=\frac12\nabla|\cQ u^{\eps_n}|^2+\cP u^{\eps_n}\cdot\nabla u^{\eps_n}
+\cQ u^{\eps_n}\cdot\nabla\cP u^{\eps_n}.$$
Projecting the first term onto divergence free vector fields gives $0,$ and we also know that $\cP u=u.$
Hence we just have to prove that
\begin{equation}\label{eq:conv2}
\cP(\cP u^{\eps_n}\cdot\nabla u^{\eps_n})\rightharpoonup \cP(\cP u\cdot\nabla u)
\quad\hbox{and}\quad
\cP(\cQ u^{\eps_n}\cdot\nabla\cP u^{\eps_n})\rightharpoonup0.
\end{equation}
This requires our proving results of strong convergence for $\cP u^{\eps_n}.$ 
To this end, we shall  exhibit uniform  bounds  for $\d_t\cP u^{\eps_n}$ in a suitable space. 
First, arguing by interpolation, we see that $(\nabla^2 u^{\eps_n})$ is bounded
in $\wt L^m(\dot B^{d/p+2/m-3}_{p,r})$ for any $m\geq1.$ 
Choosing $m>1$ so that $2d/p+2/m-3>0$ (this is possible as $p<2d$) and remembering
that $(\eps^na^{\eps_n})$ is bounded in $\wt L^\infty(\dot B^{d/p}_{p,1}),$ we thus 
get $(J(\eps_na^{\eps_n})\cA u^{\eps_n})$ bounded in $\wt L^m(\dot B^{d/p+2/m-3}_{p,r}).$
Similarly, combining the facts that $(u^{\eps_n})$ and $(\nabla u^{\eps_n})$ are bounded in 
 $\wt L^\infty(\dot B^{d/p-1}_{p,r})$ and $\wt L^m(\dot B^{d/p+2/m-2}_{p,r}),$ respectively, we see that 
$(u^{\eps_n}\cdot\nabla u^{\eps_n})$ is  bounded in $\wt L^m(\dot B^{d/p+2/m-3}_{p,r}),$ too. 
Computing $\d_t\cP u^{\eps_n}$ from \eqref{eq:conv1}, it is now clear that 
 $(\d_t\cP u^{\eps_n})$ is bounded in  $\wt L^{m}(\dot B^{d/p+2/m-3}_{p,r}).$
 Hence  $(\cP u^{\eps_n}-\cP u^{\eps_n}_0)$ is bounded in $\cC^{1-1/m}(\R_+; \dot B^{d/p+2/m-3}_{p,r}).$
As $\cP u^{\eps_n}$ is also bounded in $\wt\cC_b(\R_+;\dot B^{d/p-1}_{p,r}),$
and as the embedding of $\dot B^{d/p-1}_{p,1}$ in $\dot B^{d/p+2/m-3}_{p,1}$ is locally compact
(see e.g. \cite{BCD}, page 108), we conclude by means of Ascoli theorem that, up to a new extraction,  for all $\phi\in\cS(\R^d)$ and $T>0,$
\begin{equation}\label{eq:conv3}
\phi\cP u^{\eps_n}\longrightarrow \phi\cP u \quad\hbox{in}\quad  \cC([0,T];\dot B^{d/p+2/m-3}_{p,1}).
\end{equation}
Interpolating  with the uniform in $\cC_b(\R_+;\dot B^{d/p-1}_{p,r}),$ we can upgrade
the strong convergence in \eqref{eq:conv3}
 to the space $\cC([0,T];\dot B^{d/p-1-\alpha}_{p,1})$ for all small enough $\alpha>0,$
and all $T>0.$ Combining with the properties of weak convergence
for $\nabla u^{\eps_n}$ to $\nabla u,$ and $\cQ u^{\eps_n}$ to $0$ that may be deduced
from the uniform bounds on $u^{\eps_n},$ it is now easy to conclude to \eqref{eq:conv2}.


\section{The incompressible limit: strong convergence in the whole space case}\label{s:strong}

In this section, we combine  Strichartz estimates for the following acoustic wave equations 
\begin{equation}\label{eq:acoustic}
\left\{\begin{array}{l}
\d_ta^\eps+\Frac{\div u^\eps}\eps=F^\eps,\\[1ex]
\d_tu^\eps+\Frac{\nabla a^\eps}\eps=G^\eps
\end{array}\right.\qquad (t,x)\in\R_+\times\R^d
\end{equation}
associated to \eqref{eq:NS}, with the uniform bounds \eqref{eq:ue} for the constructed solution
$(a^\eps,u^\eps)$ so as to establish the strong convergence for $u^\eps$ to the solution 
$v$ of \eqref{eq:ins} in a proper function space. 
Recall that in a different context (that of global weak solutions), the idea of taking advantage of 
Strichartz estimates for investigating the incompressible limit goes back to the work 
of B. Desjardins and E. Grenier in \cite{DG}.

Throughout the proof, we assume the viscosity coefficients to be constant, for simplicity.
Recall that $C_0^{\eps,\nu}$ denotes the l.h.s. of \eqref{eq:smalldata1}.

 We first  consider the case $d\geq3$ which is slightly easier than the
two-dimensional case, owing to more available Strichartz estimates.

\subsubsection*{The case $d\geq3$}

Let us assume that $\eps=\nu=1$ for a while. Then the solution $(a,u)$ to \eqref{eq:NS} satisfies \eqref{eq:acoustic}
 with $F=-\div(au)$ and $G=\Delta\cQ u-\cQ(u\cdot\nabla u)-\cQ(J(a)\wt\cA u)+k(a)\nabla a,$
and  Proposition 2.2 in \cite{D3} ensures that  for all $q\in[2,\infty),$ we have 
$$ \|(a,\cQ u)\|^\ell_{\wt L^{2q/(q-2)}(\dot B^{(d-1)/q-1/2}_{q,1})}\lesssim 
\|(a_0,\cQ u_0)\|^\ell_{\dot B^{d/2-1}_{2,1}}+\|(F,G)\|_{L^1(\dot B^{d/2-1}_{2,1})}^\ell.
 $$
 {}Following  the proof of \eqref{eq:lf} to bound $F$ and $G,$ we eventually get 
 $$
  \|(a,\cQ u)\|^\ell_{\wt L^{2q/(q-2)}(\dot B^{(d-1)/q-1/2}_{q,1})}\lesssim C_0^{1,1}.
  $$
  As  we also have 
  $$
   \|(a,\cQ u)\|^\ell_{L^{1}(\dot B^{d/2+1}_{2,1})}\lesssim C_0^{1,1},
 $$
 we conclude by using the following complex interpolation result
 $$
 [L^1(\dot B^{d/2+1}_{2,1}),\wt L^{2q/(q-2)}(\dot B^{(d-1)/q-1/2}_{q,1})]_{q/(q+2)}=\wt L^2(\dot B^{(d+1)/p-1/2}_{p,1})
 \quad\hbox{with }\ p=(q+2)/2,
 $$
 that 
 \beno \|(a,\cQ u)\|^\ell_{\wt L^{2}(\dot B^{(d+1)/p-1/2}_{p,1})}\lesssim C_0^{1,1}\quad\hbox{for all }\ p\in[2,+\infty).   \eeno
Back to the original variables in  \eqref{eq:change}, we deduce that for all positive $\eps$ and $\nu,$  
\beno \nu^{1/2}\|(a^\eps,\cQ u^\eps)\|^{\ell,\wt\eps}_{\wt L^{2}(\dot B^{(d+1)/p-1/2}_{p,1})}\lesssim 
\wt\eps^{1/2-1/p}\,C_0^{\eps,\nu}.   \eeno
Of course, for the above inequality to be true, we need in addition that  the index $p$ fulfills the assumptions in Theorem \ref{th:main1}. 
Now, taking advantage of  the high-frequency cut-off (second line below) and 
\eqref{eq:ue} (third line),  we get  
 \beno\|(a^\eps,\cQ u^\eps)\|_{\wt L^2(\dot B^{(d+1)/p-1/2}_{p,1})}&\!\!\!\!\lesssim\!\!\!\! & \|(a^\eps,\cQ u^\eps)\|^{\ell,\wt\eps}_{\wt L^2(\dot B^{(d+1)/p-1/2}_{p,1})}+\|(a^\eps,\cQ u^\eps)\|^{h,\wt\eps}_{\wt L^2(\dot B^{(d+1)/p-1/2}_{p,1})} \\
&\!\!\!\!\lesssim\!\!\!\! & \| (a^\eps,\cQ u^\eps)\|^{\ell,\wt\eps}_{\wt L^2(\dot B^{(d+1)/p-1/2}_{p,1})}\!+\!
\wt\eps^{\,1/2-1/p}\|(a^\eps,\cQ u^\eps)\|^{h,\wt\eps}_{\wt L^2(\dot B^{d/p}_{p,1})}\\
&\!\!\!\!\lesssim\!\!\!\! &\nu^{-1/2}\,\wt\eps^{\,1/2-1/p}\,C_0^{\eps,\nu},
\eeno
which yields the strong convergence of $(a^\eps,\cQ u^\eps)$ to $0$ in $\wt L^2(\dot B^{(d+1)/p-1/2}_{p,1}),$ with an explicit rate.
\medbreak
Let us now go to the proof of the convergence of $\cP u^\eps.$
Setting $\du^\eps:=\cP u^\eps-u,$ we see that 
$$
\d_t \du^\eps-\mu \Delta\du^\eps+
\cP (\cP u^\eps\cdot \na \du^\eps+\du^\eps\cdot\na u) =-\cP\Bigl(u^\eps\cdot \na \cQ u^\eps + \cQ u^\eps\cdot\na \cP u^\eps+
J(\eps a^\eps)\mathcal{A} u^\eps\Bigr)\cdotp
$$
In what follows, we aim at estimating $\du^\eps$ in the  space $\wt L^\infty(\dot B^{(d+1)/p-3/2}_{p,r})\cap \wt L^1(\dot B^{(d+1)/p+1/2}_{p,r}).$  
First, applying \eqref{eq:heat} and the fact that $\cP$ is a self-map in any homogeneous Besov space gives
$$
\displaylines{\delta\!U^\eps:=\|\du^\eps\|_{\wt L^\infty(\dot B^{(d+1)/p-3/2}_{p,r})}+\mu\|\du^\eps\|_{\wt L^1(\dot B^{(d+1)/p+1/2}_{p,r})}
\hfill\cr\hfill\lesssim
\|\du_0^\eps\|_{\dot B^{(d+1)/p-3/2}_{p,r}}+ \|\cP u^\eps\cdot \na \du^\eps+\du^\eps\cdot\na u\|_{\wt L^1(\dot B^{(d+1)/p-3/2}_{p,r})} \hfill\cr\hfill+
\|u^\eps\cdot \na \cQ u^\eps + \cQ u^\eps\cdot\na \cP u^\eps+J(\eps a^\eps)\mathcal{A} u^\eps\|_{\wt L^1(\dot B^{(d+1)/p-3/2}_{p,r})}.}
$$

Next, product and composition estimates in the spirit of those of the previous sections (where we use repeatedly  that
  $(d+1)/p-1/2\leq d/p$ and  $(d+1)/p-3/2+d/p>0$) yield:
  \beno
\|\cP u^\eps\cdot \na \du^\eps\|_{\wt L^1(\dot B^{(d+1)/p-3/2}_{p,r})}&\lesssim&\| \cP u^\eps\|_{\wt L^\infty(\dot B^{d/p-1}_{p,r})}
\|\nabla\du^\eps\|_{\wt L^1(\dot B^{(d+1)/p-1/2 }_{p,r})}\\
&&\qquad\qquad+\| \cP u^\eps\|_{\wt L^1(\dot B^{d/p+1}_{p,r})}\|\nabla\du^\eps\|_{\wt L^\infty(\dot B^{(d+1)/p-5/2 }_{p,r})},
\\ \|\du^\eps\cdot\na u\|_{\wt L^1(\dot B^{(d+1)/p-3/2}_{p,r})}
&\lesssim&
\| \nabla u\|_{\wt L^\infty(\dot B^{d/p-2}_{p,r})}\|\du^\eps\|_{\wt L^1(\dot B^{(d+1)/p+1/2 }_{p,r})}\\&&\qquad\qquad+
\|\nabla u\|_{\wt L^1(\dot B^{d/p}_{p,r})}\|\du^\eps\|_{\wt L^\infty(\dot B^{(d+1)/p-3/2 }_{p,r})},\eeno
and also  
\beno \|u^\eps\cdot \na \cQ u^\eps\|_{\wt L^1(\dot B^{(d+1)/p-3/2}_{p,r})}&\lesssim&
\|\na \cQ u^\eps\|_{\wt L^2(\dot B^{(d+1)/p-3/2}_{p,1})}\|u^\eps\|_{\wt L^2(\dot B^{d/p }_{p,r}\cap \dot B^0_{\infty,1})},\\
 \|\cQ u^\eps\cdot \na \cP u^\eps\|_{\wt L^1(\dot B^{(d+1)/p-3/2}_{p,r})}&\lesssim&
\| \cQ u^\eps\|_{\wt L^2(\dot B^{(d+1)/p-1/2}_{p,1})}\|\nabla\cP u^\eps\|_{\wt L^2(\dot B^{d/p-1 }_{p,r})},\\
\|J(\eps a^\eps)\mathcal{A} u^\eps\|_{\wt L^1(\dot B^{(d+1)/p-3/2}_{p,r})}&\lesssim&
\| J(\eps a^\eps)\|_{\wt L^\infty(\dot B^{(d+1)/p-1/2}_{p,1})}\|\mathcal{A} u^\eps\|_{\wt L^1(\dot B^{d/p-1}_{p,r})}\\
&\lesssim& (1+\|\eps a^\eps\|_{\wt L^\infty(\dot B^{d/p}_{p,1})})\|\eps a^\eps\|_{\wt L^\infty(\dot B^{(d+1)/p-1/2}_{p,1})} \|u^\eps\|_{L^1(\dot B^{d/p+1}_{p,r})}.
 \eeno
Let us observe that
 \begin{eqnarray}
\quad \| \eps a^\eps\|_{\wt L^\infty(\dot B^{(d+1)/p-1/2}_{p,1})}&\!\!\!\lesssim\!\!\! & \| \eps a^\eps\|^{\ell,\wt\eps}_{\wt L^\infty(\dot B^{(d+1)/p-1/2}_{p,1})}+\| \eps a^\eps\|^{h,\wt\eps}_{\wt L^\infty(\dot B^{(d+1)/p-1/2}_{p,1})}\nonumber \\
&\!\!\!\lesssim\!\!\!& \nu^{-1}\wt\eps^{\,1/2-1/p}\|a^\eps\|^{\ell,\wt\eps}_{\wt L^\infty(\dot B^{d/2-1}_{2,1})} +\wt\eps^{1/2-1/p}\| \eps a^\eps\|^{h,\wt\eps}_{\wt L^\infty(\dot B^{d/p}_{p,1})}\nonumber\\&\!\!\!\lesssim\!\!\!& \nu^{-1}\wt\eps^{\,1/2-1/p} C_0^{\eps,\nu}.\label{eq:conv4}
\end{eqnarray}
Therefore, putting  together all the above estimates and using \eqref{eq:ue}, we get
$$
\displaylines{
\delta\!U^\eps \lesssim
\|\du_0^\eps\|_{\dot B^{(d+1)/p-3/2}_{p,r}}\hfill\cr\hfill+ 
\mu^{-1}\bigl( \|u\|_{\wt L^\infty(\dot B^{d/p-1}_{p,r})}+\mu\|u\|_{\wt L^1(\dot B^{d/p+1}_{p,r})}\bigr)\delta\! U^\eps
+ \nu^{-1}\wt\eps^{\,1/2-1/p} (1+\nu^{-1}C_0^{\eps,\nu})(C_0^{\eps,\nu})^2.}
$$
Note that Theorem \ref{thm:NS} implies that as $v_0$ is small compared to $\mu$  
(a consequence of smallness condition \eqref{eq:smalldata1}) then the solution $u$ to \eqref{eq:ins} 
with data $v_0$ exists globally and satisfies \eqref{eq:estins}. 
We thus get 
$$
\delta\!U^\eps \lesssim\|\du_0^\eps\|_{\dot B^{(d+1)/p-3/2}_{p,r}}+\wt\eps^{\,1/2-1/p} C_0^{\eps,\nu},
$$
which completes the proof of convergence in $\R^d$ if  $d\geq3.$


\subsubsection*{The case $d=2$}

Applying  Proposition 2.2 in \cite{D3} to \eqref{eq:acoustic} in the case $d=2,$ and using the bounds
of the previous section to bound the r.h.s. in $L^1(\dot B^0_{2,1}),$  we now get if $\eps=\nu=1,$
\beno \|(a,\cQ u)\|^\ell_{\wt L^{r}(\dot B^{2/q-1+1/r }_{q,1})}\lesssim C_0^{1,1}  \quad\hbox{whenever }\  2/r\leq 1/2-1/q.  \eeno
Let us emphasize that in contrast with the high-dimensional case, we cannot have $r$ smaller than $4.$
In what follows, we set $1/r=c(1/2-1/q)$ with $c\in[0,1/2]$ to be fixed later on. Observing that \eqref{eq:ue} implies that
$$
\|(a,\cQ u)\|^\ell_{L^1(\dot B^{2}_{2,1})}\lesssim C_0^{1,1}
$$
and adapting the interpolation argument used in the previous paragraph, 
we get 
\beno \|(a,\cQ u)\|^\ell_{\wt L^{2}(\dot B^{(c+2)/p-c/2}_{p,1})}\lesssim C_0^{1,1},  \eeno
where $p,$ $c$ and $q$ are interrelated through
$$
p=\frac{4q+(4-2q)c}{q+2+(2-q)c}\cdotp
$$
Note that as $c\in[0,1/2]$ and $q\in[2,+\infty],$ one can achieve any $p\in[2,6],$
which is a weaker condition than that which is imposed for $p$ in the statement of Theorem \ref{th:main1}. 
\medbreak
For general $\eps$ and $\nu,$ the above inequality recasts in 
\beno\nu^{1/2} \|(a^\eps,\cQ u^\eps)\|^{\ell,\wt\eps}_{\wt L^2(\dot B^{(c+2)/p-c/2}_{p,1})}\lesssim \wt\eps^{\,c(1/2-1/p)} C_0^{\eps,\nu}. \eeno
Arguing as in the high-dimensional case, one can get a similar inequality for the high frequencies of $(a^\eps,\cQ u^\eps),$ namely
  \beno\| (a^\eps,\cQ u^\eps)\|_{\wt L^2(\dot B^{(c+2)/p-c/2}_{p,1})}&\!\!\!\lesssim\!\!\! &
   \|(a^\eps,\cQ u^\eps)\|^{\ell,\wt\eps}_{\wt L^2(\dot B^{(c+2)/p-c/2}_{p,1})}+\| (a^\eps,\cQ u^\eps)\|^{h,\wt\eps}_{\wt L^2(\dot B^{(c+2)/p-c/2}_{p,1})} \\
&\!\!\!\lesssim\!\!\!&  \|(a^\eps,\cQ u^\eps)\|^{\ell,\wt\eps}_{\wt L^2(\dot B^{(c+2)/p-c/2}_{p,1})}
+\wt\eps^{\,c(1/2-1/p)}\|(a^\eps,\cQ u^\eps)\|^{h,\wt\eps}_{\wt L^2(\dot B^{2/p}_{p,1})}\\
&\!\!\!\lesssim\!\!\!& \nu^{-1/2}\wt\eps^{\,c(1/2-1/p)}C_0^{\eps,\nu}.
\eeno
Let us finally prove the convergence of $\cP u^\eps$ to $u$ in 
 $\wt L^\infty(\dot B^{(c+2)/p-c/2-1}_{p,r})\cap \wt L^1(\dot B^{(c+2)/p-c/2+1}_{p,r}).$
 Again, we apply Inequality \eqref{eq:heat} to the equation fulfilled by $\du^\eps,$ and get
 $$\displaylines{
 \delta\! U^\eps:= \|\du^\eps\|_{\wt L^\infty(\dot B^{(c+2)/p-c/2-1}_{p,r})}+\|\du^\eps\|_{\wt L^1(\dot B^{(c+2)/p-c/2+1}_{p,r})}\lesssim
 \|\du^\eps_0\|_{\dot B^{(c+2)/p-c/2-1}_{p,r}}
 \hfill\cr\hfill+ \|\cP u^\eps\cdot \na \du^\eps+\du^\eps\cdot\na u\|_{\wt L^1(\dot B^{(c+2)/p-c/2-1}_{p,r})} \hfill\cr\hfill+
\|u^\eps\cdot \na \cQ u^\eps + \cQ u^\eps\cdot\na \cP u^\eps+J(\eps a^\eps)\mathcal{A} u^\eps\|_{\wt L^1(\dot B^{(c+2)/p-c/2-1}_{p,r})}.}
 $$
In order to bound the nonlinear terms, we use standard continuity results for the product or paraproduct, 
and also (repeatedly) the fact that the condition on $c$ in Theorem \ref{th:main1} is equivalent to  $(c+2)/p-c/2-1+2/p>0$. 
Then we get 
\beno
\|\cP u^\eps\cdot \na \du^\eps\|_{\wt L^1(\dot B^{(c+2)/p-c/2-1}_{p,r})}&\lesssim&
\| \cP u^\eps\|_{\wt L^\infty(\dot B^{2/p-1}_{p,r})}\|\nabla\du^\eps\|_{\wt L^1(\dot B^{(c+2)/p-c/2 }_{p,r})}\\&&
\qquad\qquad+
\| \cP u^\eps\|_{\wt L^1(\dot B^{2/p+1}_{p,r})}\|\nabla\du^\eps\|_{L^\infty(\dot B^{(c+2)/p-c/2-2}_{p,r})},\\
 \|\du^\eps\cdot\na u\|_{\wt L^1(\dot B^{(c+2)/p-c/2-1}_{p,r})}&\lesssim&
\| \na u\|_{\wt L^\infty(\dot B^{2/p-2}_{p,r})}\|\du^\eps\|_{\wt L^1(\dot B^{(c+2)/p-c/2+1 }_{p,r})}\\&&
\qquad\qquad+
\|\nabla u\|_{\wt L^1(\dot B^{2/p}_{p,r})}\|\du^\eps\|_{\wt L^\infty(\dot B^{(c+2)/p-c/2-1 }_{p,r})},\eeno
and also 
\beno \|u^\eps\cdot \na \cQ u^\eps\|_{\wt L^1(\dot B^{(c+2)/p-c/2-1}_{p,r})}&\!\!\!\lesssim\!\!\!&
\| \na\cQ u^\eps\|_{\wt L^2(\dot B^{(c+2)/p-c/2-1}_{p,1})}\|u^\eps\|_{\wt L^2(\dot B^{2/p }_{p,r}\cap\dot B^0_{\infty,1})},\\
 \|\cQ u^\eps\cdot \na\cP u^\eps\|_{\wt L^1(\dot B^{(c+2)/p-c/2-1}_{p,r})}&\!\!\!\lesssim\!\!\!&
\| \cQ u^\eps\|_{\wt L^2(\dot B^{(c+2)/p-c/2}_{p,1})}\|\na\cP u^\eps\|_{\wt L^2(\dot B^{2/p-1}_{p,r})},\\
\|J(\eps a^\eps)\mathcal{A} u^\eps\|_{\wt L^1(\dot B^{(c+2)/p-c/2-1}_{p,r})}
&\!\!\!\lesssim\!\!\!& (1+\|\eps a^\eps\|_{\wt L^\infty(\dot B^{2/p}_{p,1})})\|\eps a^\eps\|_{\wt L^\infty(\dot B^{(c+2)/p-c/2}_{p,1})}
 \|u^\eps\|_{\wt L^1(\dot B^{2/p+1}_{p,r})}.
 \eeno
 In order to bound $\eps a^\eps$ in $\wt L^\infty(\dot B^{(c+2)/p-c/2}_{p,1}),$ one may argue exactly as in the case $d\geq3$:
\beno\| \eps a^\eps\|_{\wt L^\infty(\dot B^{(c+2)/p-c/2}_{p,1})}&\lesssim & \| \eps a^\eps\|^{\ell,\wt\eps}_{\wt L^\infty(\dot B^{(c+2)/p-c/2}_{p,1})}
+\|\eps a^\eps\|^{h,\wt\eps}_{\wt L^\infty(\dot B^{(c+2)/p-c/2}_{p,1})} \\
&\lesssim& \nu^{-1}\wt\eps^{\,c(1/2-1/p)}\|a^\eps\|^{\ell,\wt\eps}_{\wt L^\infty(\dot B^{0}_{2,1})} +\wt\eps^{\,c(1/2-1/p)}\| \eps a^\eps\|^{h,\wt\eps}_{\wt L^\infty(\dot B^{2/p}_{p,1})}\\&\lesssim& \nu^{-1}\wt\eps^{\,c(1/2-1/p)}C_0^{\eps,\nu}.
\eeno
So using Theorem \ref{thm:NS} to bound the terms pertaining to $u,$ it is now easy to conclude
to the last inequality of Theorem \ref{th:main1}. 
\ef


\section{The full Navier-Stokes-Fourier system}\label{s:full}

In this final section, we aim at extending the previous results to the more physically relevant case 
of non-isothermal polytropic fluids. 
The corresponding governing equations, the so-called  Navier-Stokes-Fourier system, 
 involves the density of the fluid  $\rho^\eps$ and its velocity $u^\eps.$
 To fully describe the fluid, we need to consider a \emph{third} (real valued) unknown, for instance 
 the temperature  $\theta^\eps.$ 

For simplicity, we only consider the case of perfect heat conducting and viscous gases.
We set the reference density and temperature to be $1,$ and focus on
 \emph{ill-prepared data} of the form
   $\rho_0^\eps=1+\eps a_0^\eps,$   $u_0^\eps$ and $\theta_0^\eps=1+\eps\vartheta_0^\eps$
   where $(a_0^\eps,u_0^\eps,\vartheta_0^\eps)$ are bounded in a sense that will be specified 
   later on\footnote{The reader may refer to \cite{FN-book} for the construction 
   and the low Mach asymptotic of the weak solutions to the  Navier-Stokes-Fourier equations, 
   and to \cite{Al} for the case of smoother data with large entropy variations.}.
 Setting $\rho^\eps =1+\eps a^\eps$ and $\theta^\eps=1+\eps\vartheta^\eps,$
we get the following system for $(a^\eps,u^\eps,\vartheta^\eps)$:
\begin{equation}
  \left\{
    \begin{aligned}
      & \d_t a^\eps+\frac{\div u^\eps}{\eps}=-\div (a^\eps u^\eps),\\
      &\d_t u^\eps + u^\eps\cdot\nabla u^\eps-\frac{\cA u^\eps}{1+\eps a^\eps}
      +\frac{\nabla(a^\eps+\vartheta^\eps+\eps a^\eps\vartheta^\eps)}{\eps(1+\eps a^\eps)}=0,\\
        &\d_t\vartheta^\eps+\frac{\div u^\eps}\eps+\div(\vartheta^\eps u^\eps)-\kappa\frac{\Delta\vartheta^\eps}{1+\eps a^\eps}
        =\frac\eps{1+\eps a^\eps}\Bigl(2\mu|Du^\eps|^2+\lambda(\div u^\eps)^2\Bigr)\cdotp
        \end{aligned}
  \right.
  \label{eq:NSF}
\end{equation}
We assume that the fluid is genuinely viscous and heat-conductive, that is to say
$$
\mu>0, \quad \nu:=\lambda+2\mu>0\ \hbox{ and }\ \kappa>0.
$$
Even though our results should hold for coefficients $\lambda,$ $\mu$ and $\kappa$ depending smoothly 
on the density, we only consider the constant case, for simplicity. 
\medbreak
Keeping in mind our results on the barotropic case, we want to consider families 
of small data $(a_0^\eps,u_0^\eps,\vartheta_0^\eps)$
in the space $Y^p_{0,\eps,\nu}$ defined by (still setting $\wt\eps:=\eps\nu$):
\begin{itemize}
\item $(a_0^\eps,\cQ u_0^\eps,\vartheta_0^\eps)^{\ell,\wt\eps}\in\dot B^{d/2-1}_{2,1},$
\item $(a_0^\eps)^{h,\wt\eps}\in \dot B^{d/p}_{p,1},\quad (\cQ u_0^\eps)^{h,\wt\eps}\in \dot B^{d/p-1}_{p,1}, \quad (\vartheta_0^\eps)^{h,\wt\eps}\in\dot B^{d/p-2}_{p,1},$
\item $\cP u_0^\eps\in \dot B^{d/p-1}_{p,1}.$
\end{itemize}
The existence space $Y^p_{\eps,\nu}$ is the set of triplets $(a,u,\vartheta)$ so that
\begin{itemize}
\item  $(a^{\ell,\wt\eps},\cQ u^{\ell,\wt\eps},\vartheta^{\ell,\wt\eps})\in \cC_b(\R_+;\dot B^{d/2-1}_{2,1})\cap L^1(\R_+;\dot B^{d/2+1}_{2,1}),$
\item  $a^{h,\wt\eps}\in \cC_b(\R_+;\dot B^{d/p}_{p,1})\cap L^1(\R_+; \dot B^{d/p}_{p,1}),$
\item $\vartheta^{h,\wt\eps}\in\cC_b(\R_+;\dot B^{d/p-2}_{p,1})\cap L^1(\R_+;\dot B^{d/p}_{p,1}),$
\item $\cQ u^{h,\wt\eps}$ and $\cP u^\eps$ are in $\cC_b(\R_+;\dot B^{d/p-1}_{p,1})\cap L^1(\R_+;\dot B^{d/p+1}_{p,1}),$
\end{itemize}
endowed with the norm:
$$\displaylines{
\|(a,u,\vartheta)\|_{Y^p_{\eps,\nu}}:= \|(a,\cQ u,\vartheta)\|^{\ell,\wt\eps}_{L^\infty(\dot B^{d/2-1}_{2,1})}+\|(\cP u,\cQ u^{h,\wt\eps})\|_{L^\infty(\dot B^{d/p-1}_{p,1})}
+\wt\eps\|a\|^{h,\wt\eps}_{L^\infty(\dot B^{d/p}_{p,1})}\hfill\cr\hfill+\wt\eps^{-1}\|\vartheta\|^{h,\wt\eps}_{L^\infty(\dot B^{d/p-2}_{p,1})}+\nu\|(a,\cQ u,\vartheta)\|^{\ell,\wt\eps}_{L^1(\dot B^{d/2+1}_{2,1})}+\nu\|(\cP u,\cQ u^{h,\wt\eps})\|_{L^1(\dot B^{d/p+1}_{p,1})}
+\eps^{-1}\|(a,\vartheta)\|^{h,\wt\eps}_{L^1(\dot B^{d/p}_{p,1})}.}
$$
We also set
$$\displaylines{\quad
\|(a_0,u_0,\vartheta_0)\|_{Y^p_{0,\eps,\nu}}:= \|(a_0,\cQ u_0,\vartheta_0)\|^{\ell,\wt\eps}_{\dot B^{d/2-1}_{2,1}}
\hfill\cr\hfill+\|(\cP u_0,\cQ u_0^{h,\wt\eps})\|_{\dot B^{d/p-1}_{p,1}}
+\wt\eps\|a_0\|^{h,\wt\eps}_{\dot B^{d/p}_{p,1}}+\wt\eps^{-1}\|\vartheta_0\|^{h,\wt\eps}_{\dot B^{d/p-2}_{p,1}}.\quad}
$$
Here the integer $j_0$ appearing in the threshold between low and high frequencies 
depends  only on $\wt\kappa:=\kappa/\nu,$ $\wt\mu:=\mu/\nu$ and $\wt\lambda:=\lambda/\nu$
 with $\nu:=\lambda+2\mu.$ 
\medbreak
In the case $p=2$ and $\eps=1,$  global existence for \eqref{eq:NSF} in the above space and for small data  has 
been established in \cite{D2}. 
The main goal of this section is to extend the statement to more general $p$'s, and to get estimates independent of $\eps$ and $\nu$
for the constructed solution. Furthermore, in the $\R^d$ case, we establish a strong convergence result
in the low Mach number asymptotics, in the spirit of our recent work \cite{DH}. 
 Here is the main  result of this section:
\begin{thm}\label{th:main2} Assume that the fluid domain is either $\R^d$ or $\T^d$ with $d\geq3,$ and 
that the initial data $(a_0^\eps, u_0^\eps,\vartheta^\eps_0)$ are as above with $2\leq p<d$ and $p\leq 2d/(d-2).$
 There exists a constant $\eta$ independent of $\eps$ and of $\nu$ (but depending on $\kappa/\nu$) such that  if
\begin{equation}\label{eq:smalldataf}
\|(a_0^\eps,u_0^\eps,\vartheta_0^\eps)\|_{Y^p_{0,\eps,\nu}}
\leq \eta \nu,
\end{equation}
then System \eqref{eq:NSF} with initial data $(a_0^\eps, u_0^\eps,\vartheta_0^\eps)$ has a unique global
solution $(a^\eps,u^\eps,\vartheta^\eps)$   in the space $Y^p_{\eps,\nu}$ with, for some constant $C$ independent of $\eps$ and $\nu,$
\begin{equation}\label{eq:uef}
\|(a^\eps,u^\eps,\vartheta^\eps)\|_{Y^p_{\eps,\nu}}\leq C\|(a_0^\eps,u_0^\eps,\vartheta_0^\eps)\|_{Y^p_{0,\eps,\nu}}.
\end{equation}
Furthermore, in the $\R^d$ case, if  $(a_0^\eps,u_0^\eps,\vartheta_0^\eps)$ is a family of data fulfilling \eqref{eq:smalldataf}
with  $\cP u^\eps_0\to v_0$  and $\vartheta_0^\eps-a_0^\eps \to\Theta_0$ for suitable norms, 
then we have
\begin{itemize}
\item  $(q^\eps,\cQ u^\eps)\to0$ with  $q^\eps:=\vartheta^\eps+a^\eps,$
\item $\cP u^\eps\to u$ with $u$ solution to \eqref{eq:ins},
\item $\Theta^\eps\to\Theta$ with $\Theta^\eps:=\vartheta^\eps-a^\eps$ and  $\Theta$ satisfying 
\begin{equation}\label{eq:Theta}
\d_t\Theta-\frac\kappa2\Delta\Theta+u\cdot\nabla\Theta=0,\qquad \Theta|_{t=0}=\Theta_0.
\end{equation}
\end{itemize}
More precisely, we have
\begin{equation}\label{eq:strong1}
\|(q^\eps,\cQ u^\eps)\|^{\ell,\wt\eps}_{L^2(\dot B^{(d+1)/p-1/2}_{p,1})}\lesssim \nu^{-1/2}\wt\eps^{\,1/2-1/p}\|(a_0^\eps,u_0^\eps,\vartheta_0^\eps)\|_{Y^p_{0,\eps,\nu}},
\end{equation}
 \begin{multline}\label{eq:convu}
   \|\cP u^\eps-u\|_{L^\infty(\dot B^{(d+1)/p-3/2}_{p,1})} +\mu \|\cP u^\eps-u\|_{L^1(\dot B^{(d+1)/p+1/2}_{p,1}}\\
  \lesssim \|\cP u^\eps_0-v_0\|_{\dot B^{(d+1)/p-3/2}_{p,1}}+\wt\eps^{\,1/2-1/p} \|(a_0^\eps,u_0^\eps,\vartheta_0^\eps)\|_{Y^p_{0,\eps,\nu}},
  \end{multline}
and  
   \begin{multline}\label{eq:convtheta}
  \|\dT^\eps\|_{L^\infty(\dot B^{(d+1)/p-3/2}_{p,1}+\dot B^{d/p-2}_{p,1})}
+\|\dT^\eps\|_{L^2(\dot B^{(d+1)/p-1/2}_{p,1})+L^1(\dot B^{d/p}_{p,1})}
\\\lesssim    \|\Theta^\eps_0-\Theta_0\|_{\dot B^{(d+1)/p-3/2}_{2,1}+\dot B^{d/p-2}_{p,1}}
+\wt\eps^{\,1/2-1/p}\|(a_0^\eps,u_0^\eps,\vartheta_0^\eps)\|_{Y^p_{0,\eps,\nu}}.
\end{multline}
\end{thm}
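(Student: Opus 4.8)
The plan is to transpose, almost step by step, the architecture developed for the barotropic system in Sections~\ref{s:glob}--\ref{s:strong}, the temperature being the only genuinely new unknown. As in \eqref{eq:change}, the parabolic rescaling $(a,u,\vartheta)(t,x):=\eps(a^\eps,u^\eps,\vartheta^\eps)(\eps^2\nu t,\eps\nu x)$ first reduces the \emph{a priori} estimates to the case $\eps=\nu=1.$ The crucial structural observation is that, setting $q:=a+\vartheta$ (the ``acoustic'' unknown) and $\Theta:=\vartheta-a,$ the sum of the mass and temperature equations of \eqref{eq:NSF} shows that $(q,\cQ u)$ obeys, to leading order, a wave system of the type \eqref{eq:acoustic} (with a harmless factor $2$ in front of $\div\cQ u,$ and a damping coming from $\kappa\Delta\vartheta$ and $\wt\cA u$), whereas their difference shows that $\Theta$ solves the heat--transport equation
$$
\d_t\Theta-\tfrac{\kappa}{2}\Delta\Theta+u\cdot\nabla\Theta=\tfrac{\kappa}{2}\Delta q+(\hbox{quadratic remainders}),
$$
which decouples from the acoustics at leading order since $q\to0.$ As before, $\cP u$ still satisfies the genuine heat equation $\d_t\cP u-\wt\mu\Delta\cP u=-\cP(u\cdot\nabla u)-\cP(J(\eps a)\wt\cA u).$

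For the global existence and the uniform bound \eqref{eq:uef}, I would reproduce Steps~1--6 of Section~\ref{s:glob}. The incompressible part $\cP u$ is controlled through the heat estimate \eqref{eq:heat}, the source being quadratic by product laws and Proposition~\ref{p:compo}, exactly as for \eqref{eq:Pua}. For the low frequencies $(a,\cQ u,\vartheta)^\ell,$ one uses the $L^2$ energy estimates for the linearized Navier--Stokes--Fourier system (as in~\cite{D2}), which annihilate the skew-symmetric singular first-order terms; the right-hand sides --- now including $\div(\vartheta u)$ and the viscous-heating term $\eps(2\mu|Du|^2+\lambda(\div u)^2)$ --- are estimated in $L^1(\dot B^{d/2-1}_{2,1})$ by the same paraproduct/remainder bookkeeping as in Step~2, with $p^*=2p/(p-2)$ and the embeddings it entails (the restriction $p<d,$ stronger than in the barotropic case, is dictated by the product estimates for those two terms at the temperature regularity $\dot B^{d/p-2}_{p,1}$). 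For the high frequencies, one introduces the effective velocity $w:=\cQ u+(-\Delta)^{-1}\nabla q$; since $\d_tw-\Delta w$ has a quadratic right-hand side plus lower-order terms, \eqref{eq:heat} and the cut-off at $2^{j_0}$ give control of $\cQ u^h$ and damping of $a^h$ as in Step~3, while $\vartheta^h$ is recovered from the parabolic heat-conduction structure of the temperature equation, the singular coupling $\div u/\eps$ being absorbed thanks to the $L^1(\dot B^{d/p+1}_{p,1})$ bound on $\cQ u^h$ and the $\wt\eps$-weights built into the norm of $Y^p_{\eps,\nu},$ in the spirit of Step~4; the high frequencies of $a$ then close as in \eqref{eq:ahbis}. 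Collecting these inequalities yields \eqref{eq:uef} for small data by a bootstrap; existence then follows by smoothing the data, invoking the local well-posedness theory of \cite{D0,D2,D6} together with the continuation criterion, and passing to the limit by compactness as in the existence subsection of Section~\ref{s:glob}. Uniqueness is automatic here since all the indices in $Y^p_{\eps,\nu}$ are $\ell^1$-summable, so the argument of~\cite{D6} applies.

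For the convergence statements, I would argue as for the barotropic system. The bound \eqref{eq:uef} interpolates into uniform bounds in $\wt\cC_b(\R_+;\dot B^{d/4-1}_{4,2})$-type spaces; extracting a weak-$*$ convergent subsequence, one checks that $\rho^\eps\to1$ and $\theta^\eps\to1$ strongly, that $\div u=0$ and $\cQ u^\eps\rightharpoonup 0,$ $q^\eps\rightharpoonup 0$ (rewriting the mass and temperature equations as $\div u^\eps=\cO(\eps)$), and then passes to the limit in the divergence-free-projected momentum equation --- where the pressure term drops under $\cP$ and $J(\eps a^\eps)\cA u^\eps=\cO(\wt\eps^{\,1-\alpha})$ --- and in the equation for $\Theta^\eps,$ in which $\kappa\Delta\vartheta^\eps=\tfrac{\kappa}{2}\Delta(q^\eps+\Theta^\eps)$ with $q^\eps\to0$; the required strong compactness of $\cP u^\eps$ and $\Theta^\eps$ follows from uniform bounds on $\d_t\cP u^\eps$ and $\d_t\Theta^\eps$ together with Ascoli's theorem. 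For the quantitative rates in $\R^d$ with $d\ge3,$ I would apply Proposition~2.2 in~\cite{D3} (Strichartz estimates) to the acoustic system satisfied by $(q^\eps,\cQ u^\eps),$ bound its source in $L^1(\dot B^{d/2-1}_{2,1})$ as in Step~2 --- the heat-flux term $\kappa\Delta\vartheta^\eps$ and the viscosity term $\wt\cA\cQ u^\eps$ being legitimately there on low frequencies, where $(\vartheta^\eps,\cQ u^\eps)\in L^1(\dot B^{d/2+1}_{2,1})$ --- and interpolate with the $L^1(\dot B^{d/2+1}_{2,1})$ bound of \eqref{eq:uef} to obtain \eqref{eq:strong1} (rescaling back produces the factor $\nu^{-1/2}\wt\eps^{\,1/2-1/p},$ and the high-frequency cut-off upgrades the $\ell$-truncated bound to a full one as in Section~\ref{s:strong}). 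Then, writing the equation for $\du^\eps:=\cP u^\eps-u$ and applying \eqref{eq:heat}, one bounds the transport nonlinearities by product and composition estimates, absorbs the terms linear in $\du^\eps$ using that $v_0$ is small compared with $\mu$ (Theorem~\ref{thm:NS}, so that $u$ satisfies \eqref{eq:estins}), and controls the remaining terms by the Strichartz bound on $(q^\eps,\cQ u^\eps)$ and an estimate of the form \eqref{eq:conv4} for $\eps a^\eps$; this yields \eqref{eq:convu}. Finally $\dT^\eps:=\Theta^\eps-\Theta$ solves $\d_t\dT^\eps-\tfrac{\kappa}{2}\Delta\dT^\eps+u\cdot\nabla\dT^\eps=\tfrac{\kappa}{2}\Delta q^\eps+(\hbox{terms in }\du^\eps,\cQ u^\eps,\eps a^\eps),$ and \eqref{eq:heat} together with the bounds just obtained gives \eqref{eq:convtheta}; the sum of spaces appearing there records the two regularity levels present in the source (the viscous-heating term being placed at the $\dot B^{d/p-2}_{p,1}$ level).

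The step I expect to be the main obstacle is the high-frequency analysis of the temperature: unlike $\cP u,$ the temperature does not decouple from the fast acoustic dynamics, so the effective-unknown diagonalization must be set up with care and the powers of $\wt\eps=\eps\nu$ tracked precisely enough that the singular coupling $\div u^\eps/\eps$ in the temperature equation --- and, symmetrically, the heat-flux $\kappa\Delta\vartheta^\eps$ in the acoustic block --- are absorbed with constants independent of $\eps$ and $\nu,$ while one must simultaneously check that the $3\times3$ linearized Navier--Stokes--Fourier energy estimate closes at the \emph{same} low/high frequency threshold $2^{j_0}$ (now depending also on $\kappa/\nu$) as the $2\times2$ barotropic one. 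A secondary difficulty is the interpolation and Strichartz bookkeeping leading to the sum-of-spaces estimate \eqref{eq:convtheta}.
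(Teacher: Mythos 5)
Your proposal follows essentially the same route as the paper: rescaling to $\eps=\nu=1$; heat estimates for $\cP u$; the $L^2$ energy estimate of \cite{D2} for the low frequencies of $(a,\cQ u,\vartheta)$ (with the restriction $p<d$ arising exactly where you say it does); an effective velocity combined with the parabolic structure of the temperature equation to close the high-frequency loop via $2^{-2j_0}$ absorption; and then the acoustic system for $(q^\eps,\cQ u^\eps)$ with the Strichartz estimates of \cite{D3} and interpolation, followed by heat estimates for $\du^\eps$ and $\dT^\eps$ with the sum-of-spaces bookkeeping. The only deviations are minor: the paper keeps the barotropic effective velocity $w=\cQ u+(-\Delta)^{-1}\nabla a$ and treats $\nabla\vartheta$ as a linear source term (rather than building $w$ on $\nabla q$ as you suggest, which also works but forces the extra $+\vartheta$ into the damped transport equation for $a^h$), it reserves the $q/\Theta$ diagonalization for the convergence step only, and uniqueness for the full system is quoted from \cite{ChD} rather than \cite{D6}.
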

\begin{rmk} Regarding the global existence and convergence issues, 
we expect similar results for slightly larger Besov spaces, as in the barotropic case. 
Here we only considered Besov spaces \emph{with last index $1$} for simplicity, in order 
to benefit from uniqueness (see \cite{ChD}), an open question otherwise, and also because it allows us to avoid resorting to
 $\wt L^m(\dot B^s_{p,r})$ spaces and complicated product estimates.
\end{rmk}
\begin{proof}
As in the barotropic case, performing  a suitable change of unknowns reduces the proof to the case $\eps=\nu=1,$
and coefficients $\tilde\mu,$ $\tilde\lambda$ and $\tilde\kappa.$ More precisely, we set
\begin{equation}\label{eq:changefull}
(a,u,\vartheta)(t,x)=\eps(a^\eps,u^\eps,\vartheta^\eps)(\eps^2\nu t,\eps\nu x).
\end{equation} 
Thanks to \eqref{simpdecomp},  we  notice that
\begin{equation}
\nu\|(a,u,\vartheta)\|_{Y^p_{1,1}}=\|(a^\eps,u^\eps,\vartheta^\eps)\|_{Y^p_{\eps,\nu}}
\quad\hbox{and}\quad
\nu\|(a_0,u_0,\vartheta_0)\|_{Y^p_{0,1,1}}=\|(a^\eps_0,u^\eps_0,\vartheta^\eps_0)\|_{Y^p_{0,\eps,\nu}}.
\end{equation}
So we may assume from now on that $\nu=\eps=1,$ and thus omit the exponent $\eps.$
\medbreak
Let us give the outline of the proof. The first six steps are dedicated to proving global-in-time
a priori estimates (namely \eqref{eq:uef}) for smooth solutions to \eqref{eq:NSF}, which 
is a rather easy adaptation of what we did in the barotropic case. 
In Step 7, we sketch the proof of existence. The last step is dedicated
to the low Mach number asymptotics in the $\R^d$ case. 
Throughout, we assume that \eqref{eq:smalla} is satisfied,  so that 
one may freely apply Proposition \ref{p:compo}. 

\subsubsection*{Step 1. Incompressible part of the velocity}

Let $\wt\cA:=\cA/\nu.$ we have
$$
\d_t\cP u-\wt\mu\Delta\cP u=-\cP(u\cdot\nabla u)-\cP(J(a)\wt\cA u)-\cP(\vartheta\nabla K(a)) \quad\hbox{with }\ J(0)=K(0)=0.
$$
Hence heat estimates \eqref{eq:heat} yield
$$
\|\cP u\|_{L^\infty(\dot B^{d/p-1}_{p,1})\cap L^1(\dot B^{d/p+1}_{p,1})}\!\lesssim\!
\|\cP u_0\|_{\dot B^{d/p-1}_{p,1}} +\|\cP(u\cdot\nabla u)+\cP(J(a)\wt\cA u)+\cP(\vartheta\nabla(K(a)))\|_{L^1(\dot B^{d/p-1}_{p,1})}.
$$
Only the last term is new compared to the barotropic case.
Decomposing it into
$$\vartheta\nabla(K(a))=\vartheta^\ell\nabla(K(a))+\vartheta^h\nabla(K(a)),$$
we may write\begin{multline}\label{eq:poly0}
\|\cP(\vartheta\nabla(K(a)))\|_{L^1(\dot B^{d/p-1}_{p,1})}\lesssim\|\nabla(K(a))\|_{L^2(\dot B^{d/p-1}_{p,1})}
\|\vartheta^\ell\|_{L^2(\dot B^{d/p}_{p,1})}\\
+\|\nabla(K(a))\|_{L^\infty(\dot B^{d/p-1}_{p,1})}\|\vartheta^h\|_{L^1(\dot B^{d/p}_{p,1})}.
\end{multline}
So arguing as in the barotropic case and using \eqref{eq:B}, we eventually get
\begin{equation}\label{eq:poly1}
\|\cP u\|_{L^\infty(\dot B^{d/p-1}_{p,1})\cap L^1(\dot B^{d/p+1}_{p,1})}\lesssim
\|\cP u_0\|_{\dot B^{d/p-1}_{p,1}}+2^{j_0}(1+\|a\|_{L^\infty(\dot B^{d/p}_{p,1})})\|(a,u,\vartheta)\|_{Y^p_{1,1}}^2.
\end{equation}


\subsubsection*{Step 2. Low frequencies}

Applying Projector $\cQ$ to  the velocity equation, we see that $(a,\cQ u,\vartheta)$ fulfills
$$\left\{
 \begin{aligned}
      & \d_t a+\div\cQ u=-\div (au),\\
      &\d_t\cQ u -\Delta\cQ u+\nabla(a+\vartheta) =\cQ\bigl(-u\cdot \nabla u-J(a)\wt\cA u+(a-\vartheta)\nabla(K(a))\bigr),\\
        &\d_t\vartheta-\wt\kappa\Delta\vartheta+\div\cQ u=-\div(\vartheta u)-\wt\kappa J(a)\Delta\vartheta
        +\frac1{1+a}\Bigl(2\wt\mu|Du|^2+\wt\lambda(\div u)^2\Bigr)\cdotp
        \end{aligned}
  \right.
$$
The results of \cite{D2} guarantee that
\begin{equation}\label{eq:poly1a}
\|(a,\cQ u,\vartheta)\|^\ell_{L^\infty(\dot B^{d/2-1}_{2,1})\cap L^1(\dot B^{d/2+1}_{2,1})}
\lesssim \|(a_0,\cQ u_0,\vartheta_0)\|^\ell_{\dot B^{d/2-1}_{2,1}}
+\|r.h.s.\|^\ell_{L^1(\dot B^{d/2-1}_{2,1})}.
\end{equation}
Compared to the barotropic case, we have to bound in $L^1(\dot B^{d/2-1}_{2,1})$ the low frequencies
of the following additional terms:
\begin{equation}\label{eq:poly1b}
\vartheta\nabla (K(a)), \quad\div(\vartheta u),\quad\wt\kappa J(a)\Delta\vartheta
        \quad \hbox{and }\quad \frac1{1+a}\Bigl(2\wt\mu|Du|^2+\wt\lambda(\div u)^2\Bigr)\cdotp
\end{equation}
To handle the first term, we start with the observation that 
\begin{equation}\label{eq:Ka0}
\|\nabla(K(a))\|_{\dot B^{d/2-1}_{2,1}}^\ell\lesssim \bigl(1+\|a\|_{\dot B^{d/p}_{p,1}}\bigr)\bigl(\|a\|_{\dot B^{d/2}_{2,1}}^\ell
+\|a\|_{\dot B^{d/p}_{p,1}}^h\bigr)+2^{j_0}\|a\|_{\dot B^{d/p-1}_{p,1}}\|a\|_{\dot B^{d/p}_{p,1}}^h.
\end{equation}
Indeed, because $\nabla((K(a))=K'(0)\nabla a+ \wt K(a) \nabla a$ for some smooth function $\wt K$ vanishing at zero, it suffices to prove that\footnote{For $p^*,$ we keep the definition  $1/p+1/p^*=1/2.$}
$$
\|\wt K(a) \nabla a\|_{\dot B^{d/2-1}_{2,1}}^\ell\lesssim 
\|a\|_{\dot B^{d/p}_{p,1}}\bigl(\|\nabla a\|_{\dot B^{d/2-1}_{2,1}}^\ell
+\|\nabla a\|_{\dot B^{d/p^*-1}_{p^*,1}}\bigr)+2^{j_0}\|a\|_{\dot B^{d/p-1}_{p,1}}\|\nabla a\|_{\dot B^{d/p-1}_{p,1}}^h.
$$
To this end, we use Bony's decomposition restricted to low frequencies:
$$
(\wt K(a) \nabla a)^\ell=(T_{\nabla a} \wt K(a))^\ell +(R(\nabla a, \wt K(a)))^\ell+  T_{(\wt K(a))^\ell} \nabla a^\ell
+(\dot S_{j_0} \wt K(a)\dot\Delta_{j_0+1}\nabla a)^\ell.
$$
To deal with the first two terms, we just use  \eqref{eq:Qu4a}.
For the third one, we use that $T:L^\infty\times\dot B^{d/2-1}_{2,1}\to\dot B^{d/2-1}_{2,1}$ and
 the embedding $\dot B^{d/p}_{p,1}\hookrightarrow L^\infty.$ 
For the last one, we argue as follows:
$$
2^{j_0(d/2-1)}\|\dot S_{j_0} \wt K(a)\dot\Delta_{j_0+1}\nabla a\|_{L^2} \leq 2^{j_0}\, 
2^{j_0(d/p^*-1)}\|\dot S_{j_0}\wt K(a)\|_{L^{p^*}}\,2^{j_0(d/p-1)}\|\dot\Delta_{j_0+1}\nabla a\|_{L^p}.
$$
Putting all those inequalities together, and using also
composition estimates and the fact that $d/p^*-1\leq0$ eventually leads to the desired inequality.
\smallbreak
Let us now bound $(\vartheta\nabla(K(a)))^\ell$ in $L^1(\dot B^{d/2-1}_{2,1}).$
We start again from  Bony's decomposition:
\begin{multline}\label{eq:bonydec}
(\vartheta\nabla (K(a)))^\ell=(T_{\nabla(K(a))}\vartheta)^\ell+(R(\nabla(K(a)),\vartheta))^\ell
\\+T_{\vartheta^\ell}\nabla(K(a))^\ell+(\dot S_{j_0} \vartheta\dot\Delta_{j_0+1}\nabla K(a))^\ell.
\end{multline}
The first two terms may be bounded by splitting $\vartheta$ into $\vartheta^\ell+\vartheta^h,$
using the continuity of $R$ and $T$ from $\dot B^{d/p-1}_{p,1}\times\dot B^{d/p}_{p,1}$ to $\dot B^{d/2-1}_{2,1}$ .
We end up with 
$$\displaylines{\quad
\| (T_{\nabla(K(a))}\vartheta)^\ell\|_{L^1(\dot B^{d/2-1}_{2,1})} \lesssim
\|\nabla(K(a))\|_{L^2(\dot B^{d/p-1}_{p,1})}\bigl(\|\vartheta^\ell\|_{L^2(\dot B^{d/p}_{p,1})}
+2^{j_0}\|\vartheta^h\|_{L^2(\dot B^{d/p-1}_{p,1})}\bigr).\quad}
$$ and
 \beno
\| (R(\nabla(K(a)),\vartheta))^\ell\|_{L^1(\dot B^{d/2-1}_{2,1})} &\lesssim& \|\nabla(K(a))\|_{L^\infty(\dot B^{d/p-1}_{p,1})} \|\vartheta^h\|_{L^1(\dot B^{d/p}_{p,1})}\\&&
+
\|\nabla(K(a))\|_{L^2(\dot B^{d/p-1}_{p,1})} \|\vartheta^\ell\|_{L^2(\dot B^{d/p}_{p,1})}.
\eeno
For the third term in \eqref{eq:bonydec}, by virtue of \eqref{eq:Ka0}, we write
\beno
\|T_{\vartheta^\ell}\nabla(K(a))^\ell\|_{L^1(\dot B^{d/2-1}_{2,1})}&\lesssim &\|\vartheta^\ell\|_{L^2(L^\infty)}\|\nabla(K(a))^\ell\|_{\dot B^{d/2-1}_{2,1}}\\
&\lesssim & \|\vartheta^\ell\|_{L^2(\dot B^{d/2}_{2,1})}\bigl(1+2^{j_0}\|a^\ell\|_{L^\infty(\dot B^{d/2-1}_{2,1})}\\&&+\|a^h\|_{L^\infty(\dot B^{d/p}_{p,1})}\bigr)\bigl( \|a\|^\ell_{L^2(\dot B^{d/2}_{2,1})}+\|a\|^h_{L^2(\dot B^{d/p}_{p,1})}\bigr).
\eeno
Finally,
$$
2^{j_0(d/2-1)}\|\dot S_{j_0} \vartheta\dot\Delta_{j_0+1}\nabla K(a))\|_{L^2}
 \leq 2^{j_0(d/p^*-1)}\|\dot S_{j_0}\vartheta\|_{L^{p^*}}\,2^{j_0d/p}\|\dot\Delta_{j_0+1}\nabla(K(a))\|_{L^p}.
$$
Hence
$$
\|\dot S_{j_0} \vartheta\dot\Delta_{j_0+1}\nabla(K(a))\|_{L^1(\dot B^{d/2-1}_{2,1})}
 \lesssim 2^{j_0} \|\vartheta^\ell\|_{L^\infty(\dot B^{d/p^*-1}_{p^*,1})} 
 \|K(a)\|^h_{L^1(\dot B^{d/p}_{p,1})}.
 $$
 That the last term does belong to $L^1(\dot B^{d/p}_{p,1})$ may be seen by writing 
 $$
 K(a)=K'(0)\, a+ \wt K(a)\, a\quad\hbox{with }\ \wt K(0)=0,
 $$
 which ensures, using composition estimates in $\dot B^{d/p}_{p,1},$
 \begin{equation}\label{eq:Ka}
  \|K(a)\|_{L^1(\dot B^{d/p}_{p,1}))}^h\lesssim \|a\|^h_{L^1(\dot B^{d/p}_{p,1})}
  +\|a\|_{L^2(\dot B^{d/p}_{p,1})}^2.
  \end{equation}
  Resuming to \eqref{eq:bonydec}, we conclude that
  \begin{multline}
  \|(\vartheta\nabla (K(a)))^\ell\|_{L^1(\dot B^{d/2-1}_{2,1})}\lesssim
  2^{j_0}\|\vartheta^\ell\|_{L^\infty(\dot B^{d/2-1}_{2,1})}\bigl(\|a^h\|_{L^1(\dot B^{d/p}_{p,1})}+\|a\|_{L^2(\dot B^{d/p}_{p,1})}^2\bigr)\\
  +\bigl(1+2^{j_0}\|a^\ell\|_{L^\infty(\dot B^{d/2-1}_{2,1})}+\|a^h\|_{L^\infty(\dot B^{d/p-1}_{p,1})}\bigr)\bigl(
  \|\vartheta^\ell\|_{L^2(\dot B^{d/2}_{2,1})}\\+2^{j_0}\|\vartheta^h\|_{L^2(\dot B^{d/p-1}_{p,1})}+ \|\vartheta^h\|_{L^1(\dot B^{d/p}_{p,1})}\bigr)\bigl( \|a\|^\ell_{L^2(\dot B^{d/2}_{2,1})}+\|a\|^h_{L^2(\dot B^{d/p}_{p,1})}\bigr).
    \end{multline}
To handle $\div(\vartheta u),$ we decompose $\vartheta$ into low and high frequencies. 
To deal with both parts, we resort again to Bony's decomposition and continuity results for $R$ and $T.$
We end up with 
\begin{eqnarray}\label{eq:poly1c}
\|\vartheta^\ell u\|_{\dot B^{d/2}_{2,1}}\lesssim \|\vartheta^\ell\|_{\dot B^{d/p^*-1}_{p^*,1}} \|u\|_{\dot B^{d/p+1}_{p,1}}
+\|u\|_{L^\infty}\|\vartheta^\ell\|_{\dot B^{d/2}_{2,1}},\\\label{eq:poly1d}
\|\vartheta^h u\|_{\dot B^{d/2-1}_{2,1}}\lesssim \|\vartheta^h\|_{\dot B^{d/p-1}_{p,1}} \|u\|_{\dot B^{d/p}_{p,1}}
+\|u\|_{\dot B^{d/p^*-1}_{p^*,1}}\|\vartheta^h\|_{\dot B^{d/p}_{p,1}}.
\end{eqnarray}
Therefore, taking advantage of the low frequency cut-off and of Bernstein inequality yields
  \begin{multline}
  \|(\div(\vartheta u))^\ell\|_{L^1(\dot B^{d/2-1}_{2,1})}\lesssim 
  \|\vartheta^\ell\|_{L^\infty(\dot B^{d/2-1}_{2,1})}\|u\|_{L^1(\dot B^{d/p+1}_{p,1})}
  \\+\bigl(\|\vartheta^\ell\|_{L^2(\dot B^{d/2}_{2,1})}+2^{j_0}\|\vartheta^h\|_{L^2(\dot B^{d/p-1}_{p,1})}\bigr)
  \|u\|_{L^2(\dot B^{d/p}_{p,1})}+2^{j_0}\|u\|_{L^\infty(\dot B^{d/p-1}_{p,1})}\|\vartheta^h\|_{L^1(\dot B^{d/p}_{p,1})}.
  \end{multline}
For the next term, we use 
$$
J(a)\Delta\vartheta=J(a)\Delta\vartheta^\ell+J(a)\Delta\vartheta^h
$$
and Bony's decomposition. 
For the first term, we easily get
$$
\|J(a)\Delta\vartheta^\ell\|_{L^1(\dot B^{d/2-1}_{2,1})}\lesssim 
\|\Delta\vartheta^\ell\|_{L^1(\dot B^{d/2-1}_{2,1})}\|a\|_{L^\infty(\dot B^{d/p}_{p,1})}.
$$
For the second one, we use that
$R$ and $T$ map $\dot B^{d/p-2}_{p,1}\times \dot B^{d/p}_{p,1}$ to $\dot B^{d/2-2}_{2,1},$ if $p<d$ and $d\geq3,$
and that 
$$
\|(T_{J(a)}\Delta\vartheta^h+R(J(a),\Delta\vartheta^h))^\ell\|_{\dot B^{d/2-2}_{2,1}}\lesssim \|J(a)\|_{\dot B^{d/p}_{p,1}}\|\Delta\vartheta^h\|_{\dot B^{d/p-2}_{p,1}}.
$$
Hence, combining with  Bernstein inequality, 
$$\displaylines{
\|(J(a)\Delta\vartheta)^\ell\|_{L^1(\dot B^{d/2-1}_{2,1})}\hfill\cr\hfill\lesssim 2^{j_0}\bigl(\|a\|_{L^\infty(\dot B^{d/p}_{p,1})}
+2^{j_0}\|a\|_{L^\infty(\dot B^{d/p-1}_{p,1})}\bigr)
\bigl(\|\Delta\vartheta^\ell\|_{L^1(\dot B^{d/2-1}_{2,1})}+\|\Delta\vartheta^h\|_{L^1(\dot B^{d/p-2}_{p,1})}\bigr).}
$$
To handle the last term in \eqref{eq:poly1b}, we use the fact that
$$\begin{array}{rcl}
\|T_{J(a)}Du\otimes Du\|_{\dot B^{d/2-2}_{2,1}}&\lesssim&
 \|J(a)\|_{L^\infty}\| Du\otimes Du\|_{\dot B^{d/2-2}_{2,1}}\\
 \|R(J(a),Du\otimes Du)\|_{\dot B^{d/2-2}_{2,1}}&\lesssim&
 \|J(a)\|_{\dot B^{d/p}_{p,1}}\|Du\otimes Du\|_{\dot B^{d/2-2}_{2,1}}\\
 \|T_{Du\otimes Du} J(a)\|_{\dot B^{d/2-2}_{2,1}}&\lesssim& \|Du\otimes Du\|_{\dot B^{d/p^*-2}_{p^*,1}} \|J(a)\|_{\dot B^{d/p}_{p,1}}.
\end{array}
$$
At this point, we notice that, under assumption $2\leq p\leq 2d/(d-2),$ $p<d$ and $d\geq3,$  the usual product
maps $\dot B^{d/p-1}_{p,1}\times\dot B^{d/p-1}_{p,1}$ to $\dot B^{d/2-2}_{2,1}.$
Therefore
\begin{equation}
\Bigl\|\frac1{1+a}Du\otimes Du\Bigr\|_{L^1(\dot B^{d/2-2}_{2,1})}
\lesssim \bigl(1+\|a\|_{L^\infty(\dot B^{d/p}_{p,1})}\bigr)\|Du\|_{L^2(\dot B^{d/p-1}_{p,1})}^2.
\end{equation}
Inserting all the above inequalities in \eqref{eq:poly1a} and using \eqref{eq:B}, we thus end up with
\begin{multline}\label{eq:poly2}
\|(a,\cQ u,\vartheta)\|^\ell_{L^\infty(\dot B^{d/2-1}_{2,1})\cap L^1(\dot B^{d/2+1}_{2,1})}
\\\lesssim\|(a_0,\cQ u_0,\vartheta_0)\|^\ell_{\dot B^{d/2-1}_{2,1}} +2^{2j_0}(1+ \|(a,u,\vartheta)\|_{Y^p_{1,1}})\|(a,u,\vartheta)\|_{Y^p_{1,1}}^2.
\end{multline}


\subsubsection*{Step 3. High frequencies: the effective velocity} Let $w:=\cQ u+(-\Delta)^{-1}\nabla a.$ We have
$$
\d_tw-\Delta w=-\cQ(u\cdot\nabla u)+\cQ(J(a)\wt\cA u)-\cQ(\vartheta \nabla K(a))+a\nabla K(a)+\cQ(au)-\nabla\vartheta+w-(-\Delta)^{-1}\nabla a.
$$
By virtue of \eqref{eq:heat}, we have
$$
\|w\|^h_{L^\infty(\dot B^{d/p-1}_{p,1})\cap L^1(\dot B^{d/p+1}_{p,1})}\lesssim \|w_0\|^h_{\dot B^{d/p-1}_{p,1}}
+ \|r.h.s.\|^h_{L^1(\dot B^{d/p-1}_{p,1})}.
$$
Compared to the barotropic case,  two new terms have to be handled : $\cQ(\vartheta\nabla (K(a)))$  and $\nabla\vartheta.$
The first one has been estimated in \eqref{eq:poly0}, and the second one is just linear.
 We eventually get if $j_0$ is large enough:
 \begin{multline}\label{eq:poly3}
 \|w\|^h_{L^\infty(\dot B^{d/p-1}_{p,1})\cap L^1(\dot B^{d/p+1}_{p,1})}\lesssim \|w_0\|^h_{\dot B^{d/p-1}_{p,1}}
\\+2^{j_0}\|(a,u,\vartheta)\|_{Y^p_{1,1}}^2+\|\nabla\vartheta\|_{L^1(\dot B^{d/p-1}_{p,1})}^h+2^{-2j_0}\|a\|_{L^1(\dot B^{d/p}_{p,1})}^h.
 \end{multline}

\subsubsection*{Step 4. High frequencies: the temperature}

Applying \eqref{eq:heat} to the heat equation
$$
\d_t\vartheta-\wt\kappa\Delta\vartheta=-a-\div w-\div(\vartheta u)-\wt\kappa J(a)\Delta\vartheta
        +\frac1{1+a}\Bigl(2\wt\mu|Du|^2+\wt\lambda(\div u)^2\Bigr)
$$
yields
$$
 \|\vartheta\|^h_{L^\infty(\dot B^{d/p-2}_{p,1})\cap L^1(\dot B^{d/p}_{p,1})}\lesssim \|\vartheta_0\|^h_{\dot B^{d/p-2}_{p,1}}
 +\|r.h.s.\|_{L^1(\dot B^{d/p-2}_{p,1})}^h.
  $$
  The term $\div(\vartheta u)$ can be bounded according to \eqref{eq:poly1c} and \eqref{eq:poly1d}, 
  using obvious embedding. 
  For the other nonlinear terms, we observe that under condition $p<d,$ we have
  $$\begin{array}{lll}
  \|J(a)\Delta\vartheta^h\|_{L^1(\dot B^{d/p-2}_{p,1})}&\!\!\!\lesssim\!\!\!& \|a\|_{L^\infty(\dot B^{d/p}_{p,1})}\|\Delta\vartheta^h\|_{L^1(\dot B^{d/p-2}_{p,1})},\\[2ex]
    \|J(a)\Delta\vartheta^\ell\|_{L^1(\dot B^{d/p-2}_{p,1})}^h&\!\!\!\lesssim\!\!\!&2^{-j_0}   \|J(a)\Delta\vartheta^\ell\|_{L^1(\dot B^{d/p-1}_{p,1})}^h
    \lesssim2^{-j_0}
     \|a\|_{L^\infty(\dot B^{d/p}_{p,1})}\|\Delta\vartheta^\ell\|_{L^1(\dot B^{d/2-1}_{2,1})},
     \end{array}
     $$
     $$
 \Bigl\| \frac1{1+a}\Bigl(2\mu|Du|^2\!+\!\lambda(\div u)^2\Bigr)\Bigr\|_{L^1(\dot B^{d/p-2}_{p,1})}
 \lesssim (1+ \|a\|_{L^\infty(\dot B^{d/p}_{p,1})})\|\nabla u\|_{L^2(\dot B^{d/p-1}_{p,1})}^2,
 $$
whence
  \begin{multline}\label{eq:poly4}
   \|\vartheta\|^h_{L^\infty(\dot B^{d/p-2}_{p,1})\cap L^1(\dot B^{d/p}_{p,1})}\lesssim \|\vartheta_0\|^h_{\dot B^{d/p-2}_{p,1}}
   +2^{-2j_0}\|a+\div w\|_{L^1(\dot B^{d/p}_{p,1})}^h
  \\ +2^{j_0}(1+ \|(a,u,\vartheta)\|_{Y^p_{1,1}})\|(a,u,\vartheta)\|_{Y^p_{1,1}}^2.
  \end{multline}


  \subsubsection*{Step 5. High frequencies: the density}

Exactly as in the barotropic case, Inequality \eqref{eq:ah} is fulfilled.


\subsubsection*{Step 6. Closure of the estimates}
Inserting \eqref{eq:poly4} in \eqref{eq:poly3}, we get for large enough $j_0$
$$\displaylines{
 \|w\|^h_{L^\infty(\dot B^{d/p-1}_{p,1})\cap L^1(\dot B^{d/p+1}_{p,1})}\lesssim \|w_0\|^h_{\dot B^{d/p-1}_{p,1}}
 + \|\vartheta_0\|^h_{\dot B^{d/p-2}_{p,1}} \hfill\cr\hfill
+2^{j_0}(1+  \|(a,u,\vartheta)\|_{Y^p_{1,1}}) \|(a,u,\vartheta)\|_{Y^p_{1,1}}^2
+2^{-2j_0}\|a\|^h_{L^1(\dot B^{d/p}_{p,1})}.}
 $$
Next, plugging that latter inequality in \eqref{eq:ah},    we get for large enough $j_0,$
$$
\|a\|^h_{L^1\cap L^\infty(\dot B^{d/p}_{p,1})} \lesssim \|a_0\|^h_{\dot B^{d/p}_{p,1}}+
\|w_0\|^h_{\dot B^{d/p-1}_{p,1}}
 + \|\vartheta_0\|^h_{\dot B^{d/p-2}_{p,1}}
+2^{j_0}(1+  \|(a,u,\vartheta)\|_{Y^p_{1,1}}) \|(a,u,\vartheta)\|_{Y^p_{1,1}}^2.
$$
Resuming to \eqref{eq:poly1} and \eqref{eq:poly2}, it is now easy to conclude that
$$
  \|(a,u,\vartheta)\|_{Y^p_{1,1}}\lesssim \|(a_0,u_0,\vartheta_0)\|_{Y^p_{0,1,1}}
  + 2^{2j_0}(1+  \|(a,u,\vartheta)\|_{Y^p_{1,1}}) \|(a,u,\vartheta)\|_{Y^p_{1,1}}^2,
  $$
  from which it is clear that we may get \eqref{eq:uef} if $ \|(a_0,u_0,\vartheta_0)\|_{Y^p_{0,1,1}}$ is small enough.

  
  \subsubsection*{Step 7. The proof of global existence and uniqueness}
  
 Uniqueness up to $p<d$ is just a consequence of the recent paper \cite{ChD}.
Local-in-time existence of a solution $(a,u,\vartheta)$ to \eqref{eq:NSF}  with $a\in\cC([0,T];\dot B^{d/p}_{p,1}),$
$u\in\cC([0,T];\dot B^{d/p-1}_{p,1})\cap L^1(0,T;\dot B^{d/p+1}_{p,1})\ \hbox{ and }\ 
\vartheta\in\cC([0,T];\dot B^{d/p-2}_{p,1})\cap L^1(0,T;\dot B^{d/p}_{p,1})$
has been established in \cite{D0}. 
 That the additional low frequency $L^2$ type regularity is preserved during the evolution 
is a consequence of the computations that have been carried out in Step 2. 

Finally,  by slight modifications of the blow-up criterion of Prop. 10.10 of \cite{BCD}, one can show that if 
$$
\|\nabla u\|_{L^1_T(L^\infty)}+\|a\|_{L_T^\infty(\dot B^{d/p}_{p,1})}+\|\vartheta\|_{L_T^1(\dot B^{d/p}_{p,1})}<\infty
$$
then the solution may be continued beyond $T.$
As the norm in the space $Y^p_{1,1}$ (restricted to $[0,T)$) clearly controls the above l.h.s.,  Inequality  
\eqref{eq:uef} implies the global existence.

\subsubsection*{Step 8. Low Mach number limit : strong convergence in the whole space case} 

As in our recent work \cite{DH} dedicated to the Oberbeck-Boussinesq approximation, 
the proof of strong convergence relies on the dispersive properties of the system 
fulfilled by $q^\eps:=\vartheta^\eps+a^\eps$ and $\cQ u^\eps,$ namely
$$
\left\{\begin{array}{l}
\d_tq^\eps+\Frac2\eps\div\cQ u^\eps=-\div(u^\eps q^\eps)+\kappa\Delta\vartheta^\eps
+\kappa J(\eps a^\eps)\Delta\vartheta^\eps+\frac\eps{1\!+\!\eps a^\eps}
\bigl(2\mu |Du^\eps|^2+\lambda(\div u^\eps)^2\bigr),\\[1.5ex]
\d_t\cQ u^\eps+\Frac1\eps\nabla q^\eps=\nu\Delta\cQ u^\eps-\cQ(u^\eps\cdot\nabla u^\eps)
-\cQ(J(\eps a^\eps)\cA u^\eps)+\cQ\biggl((a^\eps-\vartheta^\eps)\frac{\nabla a^\eps}{1\!+\!\eps a^\eps}\biggr)\cdotp
\end{array}\right.
$$
Remembering  that the low frequencies of the r.h.s. have been bounded in $L^1(\R_+;\dot B^{d/2-1}_{2,1})$
by $C_0^{\eps,\nu}:=\|(a_0^\eps,u^\eps_0,\vartheta_0^\eps)\|_{Y^p_{0,\eps,\nu}}$ (see  Step 2), one can mimic the proof of the strong
convergence for the barotropic case in the case $d\geq3$ (see the beginning of Section \ref{s:strong})
and easily conclude that \eqref{eq:strong1} is satisfied. 

The high frequencies of $a^\eps$ and $\cQ u^\eps$ may be bounded as in \eqref{eq:strong1}
(argue as in the barotropic case)
but not  $(\vartheta^\eps)^{h,\wt\eps}$ which is one derivative less regular than $(a^\eps)^{h,\wt\eps}.$
\medbreak
Let us now study the strong convergence of $\cP u^\eps$ to $u.$
To this end, we observe that $\du^\eps:=\cP u^\eps-u$ fulfills
\begin{multline}\label{eq:strong0}
\d_t\du^\eps-\mu\Delta\du^\eps+\cP(\cP u^\eps\!\cdot\!\nabla\du^\eps+\du^\eps\!\cdot\!\nabla u)
+\cP\bigl(u^\eps\!\cdot\!\nabla\cQ u^\eps+\cQ u^\eps\!\cdot\!\nabla\cP u^\eps)+J(\eps a^\eps)\cA u^\eps\bigr)\\
=\cP\biggl(\frac1{1+\eps a^\eps}\biggl(
\nabla (q^\eps)^{\ell,\wt\eps}a^\eps+\nabla (a^\eps)^{h,\wt\eps}a^\eps
+\nabla (\vartheta^\eps)^{h,\wt\eps}a^\eps\biggr)+J(\eps a^\eps)\nabla(a^\eps\vartheta^\eps)\biggr)\cdotp
\end{multline}
The first line may be handled as in the barotropic case : we get
$$
\displaylines{\|\cP(\cP u^\eps\cdot\nabla\du^\eps+\du^\eps\cdot\nabla u)\|_{L^1(\dot B^{(d+1)/p-3/2}_{p,1})}
\lesssim \|\cP u^\eps\|_{L^\infty(\dot B^{d/p-1}_{p,1})}\|\nabla\du^\eps\|_{L^1(\dot B^{(d+1)/p-1/2}_{p,1})}
\hfill\cr\hfill+\|\du^\eps\|_{L^\infty(\dot B^{(d+1)/p-3/2}_{p,1})}\|\nabla u\|_{L^1(\dot B^{d/p}_{p,1})},\cr
\|\cP\bigl(u^\eps\cdot\nabla\cQ u^\eps+\cQ u^\eps\cdot\nabla\cP u^\eps)+J(\eps a^\eps)\cA u^\eps\bigr)\|_{L^1(\dot B^{(d+1)/p-3/2}_{p,1})}
\lesssim\nu^{-1}\eps^{\,1/2-1/p}(1+\nu^{-1}C_0^{\eps,\nu})(C_0^{\eps,\nu})^2.}
$$
In order to bound the terms of the second line of \eqref{eq:strong0}, we  shall use repeatedly the fact that  for any smooth function $K$ vanishing at $0,$ we have, by virtue of Proposition \ref{p:compo},
\begin{equation}\label{eq:strong2}
\|K(\eps a^\eps)\|_{L^\infty(\dot B^{d/p}_{p,1})}\lesssim\nu^{-1}\bigl(\|a^\eps\|^{\ell,\wt\eps}_{L^\infty(\dot B^{d/p-1}_{p,1})}
+\wt\eps\|a^\eps\|^{h,\wt\eps}_{L^\infty(\dot B^{d/p}_{p,1})}\bigr)\lesssim\nu^{-1}C_0^{\eps,\nu}.
\end{equation}
On the one hand, using product laws in Besov spaces yields
$$\begin{array}{lll}
\|\nabla(q^\eps)^{\ell,\wt\eps} a^\eps\|_{L^1(\dot B^{(d+1)/p-3/2}_{p,1})}
&\lesssim& \|(q^\eps)^{\ell,\wt\eps}\|_{L^2(\dot B^{(d+1)/p-1/2}_{p,1})}\| a^\eps\|_{L^2(\dot B^{d/p}_{p,1})},\\[1ex]
\|\nabla(a^\eps)^{h,\wt\eps} a^\eps\|_{L^1(\dot B^{(d+1)/p-3/2}_{p,1})}
&\lesssim& \|(a^\eps)^{h,\wt\eps}\|_{L^2(\dot B^{(d+1)/p-1/2}_{p,1})}\|a^\eps\|_{L^2(\dot B^{d/p}_{p,1})},\\[1ex]
\|\nabla(\vartheta^\eps)^{h,\wt\eps} a^\eps\|_{L^1(\dot B^{(d+1)/p-3/2}_{p,1})}
&\lesssim& \|(\vartheta^\eps)^{h,\wt\eps}\|_{L^1(\dot B^{d/p}_{p,1})}\| a^\eps\|_{L^\infty(\dot B^{(d+1)/p-1/2}_{p,1})}\\
[1ex] \|J(\eps a^\eps)\nabla(a^\eps\vartheta^\eps)\|_{L^1(\dot B^{(d+1)/p-3/2}_{p,1})}
&\lesssim& \|\eps a^ \eps\|_{L^\infty(\dot B^{(d+1)/p-1/2}_{p,1})}(\|(\vartheta^\eps)^{h,\wt\eps}\|_{L^1(\dot B^{d/p}_{p,1})}\| a^\eps\|_{L^\infty(\dot B^{d/p}_{p,1})}\\[1ex]&&+\|(\vartheta^\eps)^{l,\wt\eps}\|_{L^2(\dot B^{d/p}_{p,1})}\| a^\eps\|_{L^2(\dot B^{d/p}_{p,1})}).
\end{array}
$$
Hence using \eqref{eq:conv4}, \eqref{eq:strong1}, \eqref{eq:uef} and \eqref{eq:strong2},  
$$
\displaylines{
\Bigl\|\cP\Bigl(\frac1{1+\eps a}\Bigl(
\nabla (q^\eps)^{\ell,\wt\eps} a^\eps+\nabla(a^\eps)^{h,\wt\eps} a^\eps
+\nabla(\vartheta^\eps)^{h,\wt\eps}a^\eps\Bigr)+J(\eps a^\eps)\nabla(a^\eps\vartheta^\eps)\Bigr)\Bigr\|_{L^1(\dot B^{(d+1)/p-3/2}_{p,1})}\hfill\cr\hfill
\lesssim \nu^{-1}(1+\nu^{-1}C_0^{\eps,\nu})(C_0^{\eps,\nu})^2.}
$$
Putting together all the above inequalities and the uniform estimate \eqref{eq:uef}, we end up with
 $$\displaylines{
\dU^\eps:= \|\du^\eps\|_{L^\infty(\dot B^{(d+1)/p-3/2}_{p,1})}
 +\mu \|\du^\eps\|_{L^1(\dot B^{(d+1)/p+1/2}_{p,1})}
 \hfill\cr\hfill\lesssim \|\cP u^\eps_0-v_0\|_{\dot B^{(d+1)/p-3/2}_{p,1}}
 +\nu^{-1}C_0^{\eps,\nu}\dU^\eps+\nu^{-1}\wt\eps^{\,1/2-1/p} (1+\nu^{-1}C_0^{\eps,\nu})(C_0^{\eps,\nu})^2,}
 $$
 which obviously implies  \eqref{eq:convu}, owing to the smallness  condition satisfied by $C_0^{\eps,\nu}.$ 
 \medbreak
Let us finally study the strong convergence of $\Theta^\eps:=\vartheta^\eps-a^\eps$
to the solution $\Theta$ of \eqref{eq:Theta}.
Given the uniform bounds for $(\vartheta^\eps_0)$ and for $(a^\eps_0),$ it is natural to 
assume that the limit $\Theta_0$ belongs to 
$\dot B^{d/2-1}_{2,1}$ (as a matter of fact  $\dot B^{d/p-1}_{p,1}$ is enough for what follows).  Likewise, as $(\cP u_0^\eps)$ is bounded in 
$\dot B^{d/p-1}_{p,1},$ one may assume that its weak limit $v_0$ belongs to $\dot B^{d/p-1}_{p,1}.$ 
Hence the corresponding solution $u$ to \eqref{eq:ins} is in  
$\cC(\R_+;\dot B^{d/p-1}_{p,1})\cap L^1(\R_+;\dot B^{d/p+1}_{p,1}),$
and using the fact that $\div(u\Theta)=u\cdot\nabla\Theta,$ it is easy to prove that the linear equation \eqref{eq:Theta} admits
 a unique solution $\Theta\in\cC_b(\R_+;\dot B^{d/2-1}_{2,1})\cap L^1(\R_+;\dot B^{d/2+1}_{2,1}).$
 \smallbreak
Next,  from \eqref{eq:Theta}, observing that $\Delta\vartheta^\eps=\frac12\Delta\Theta^\eps+\frac12\Delta q^\eps,$
we readily get that $\dT^\eps:=\Theta^\eps-\Theta$ satisfies
\begin{multline}\label{eq:strong3}
\d_t\dT^\eps-\frac\kappa2\Delta\dT^\eps=-\cP u^\eps\cdot\nabla\dT^\eps-\du^\eps\cdot\nabla\Theta
-\div(\cQ u^\eps\Theta^\eps)\\
-\frac\kappa2\Delta q^\eps-\kappa J(\eps a^\eps)\Delta\vartheta^\eps+\frac\eps{1+\eps a^\eps}
\bigl(2\mu|Du^\eps|^2+\lambda(\div u^\eps)^2\bigr).
\end{multline}
The level of regularity on which estimates for $\dT^\eps$ may be proved, is essentially given by the available estimates
for $\du^\eps,$ through the term $\du^\eps\cdot\nabla\Theta=\div(\du^\eps\Theta),$
by the fact that decay estimates are available  for the low frequencies of the term $\Delta q^\eps$ in
the space  $L^2(\R_+;\dot B^{(d+1)/p-5/2}_{p,1})$  only through \eqref{eq:strong1}, and 
by observing that the high frequencies of $\Delta q^\eps$ (and more precisely of $\Delta\vartheta^\eps$)
 are at most in the space $L^1(\dot B^{d/p-2}_{p,1}),$ but have decay $\eps.$

As regards $\du^\eps\cdot\nabla\Theta,$  product laws in Besov spaces give the following bound:
$$
\|\du^\eps\cdot\nabla\Theta\|_{L^1(\dot B^{(d+1)/p-3/2}_{p,1})}
\lesssim  \|\du^\eps\|_{L^\infty(\dot B^{(d+1)/p-3/2}_{p,1})}\|\Theta\|_{L^1(\dot B^{d/2+1}_{2,1})}.
$$
Note that  only an $L^2$-in-time estimate is available for  $(\Delta q^\eps)^{\ell,\wt\eps},$
through \eqref{eq:strong1}. However,  a small variation on \eqref{eq:heat} (see e.g. \cite{BCD}) ensures that the solution to 
$$
\d_tz-\kappa\Delta z=-\frac\kappa2(\Delta q^\eps)^{\ell,\wt\eps},\qquad z|_{t=0}=0
$$
 belongs to $\cC_b(\R_+;\dot B^{(d+1)/p-3/2}_{p,1})\cap L^2(\R_+;\dot B^{(d+1)/p-1/2}_{p,1})$
and satisfies 
$$
\|z\|_{L^2(\dot B^{(d+1)/p-1/2}_{p,1})}+ \|z\|_{L^\infty(\dot B^{(d+1)/p-3/2}_{p,1})}
\lesssim \|\Delta q^\eps\|_{L^2(\dot B^{(d+1)/p-5/2}_{p,1})}^{\ell,\wt\eps}. 
 $$
 So in short we expect to be able to bound  $\dT^\eps$ 
in $$L^\infty(\R_+;\dot B^{(d+1)/p-3/2}_{p,1}+\dot B^{d/p-2}_{p,1})\cap \bigl(L^2(\R_+;\dot B^{(d+1)/p-1/2}_{p,1})
+L^1(\R_+;\dot B^{d/p}_{p,1})\bigr).$$
Let us now look at the other terms in the r.h.s. of \eqref{eq:strong3}. 
It is clear that $(\Delta a^\eps)^{h,\wt\eps}$ may be bounded exactly as $(\Delta q^\eps)^{\ell,\wt\eps}.$
Next, product laws easily give that
$$
\begin{array}{lll}
\|\cP u^\eps\cdot\nabla\dT^\eps\|_{L^1(\dot B^{(d+1)/p-3/2}_{p,1}+\dot B^{d/p-2}_{p,1})}
&\!\!\!\!\lesssim\!\!\!\!& \|\cP u^\eps\|_{L^2(\dot B^{d/p}_{p,1})}\|\dT^\eps\|_{L^2(\dot B^{(d+1)/p-1/2}_{p,1}+\dot B^{d/p-1}_{p,1})},\\[1.5ex]
\|\div(\cQ u^\eps\Theta^\eps)\|_{L^1(\dot B^{(d+1)/p-3/2}_{p,1}+\dot B^{d/p-2}_{p,1})}&\!\!\!\!\lesssim\!\!\!\!&
\|\cQ u^\eps\|_{L^2(\dot B^{(d+1)/p-1/2}_{p,1})}\bigl(\|\Theta^\eps\|_{L^2(\dot B^{d/2}_{2,1})}^{\ell,\wt\eps}
+\|a^\eps\|_{L^2(\dot B^{d/p}_{p,1})}^{h,\wt\eps}\bigr)\\
&&\qquad\qquad\qquad+\|\cQ u^\eps\|_{L^2(\dot B^{d/p}_{p,1})}\|\vartheta^\eps\|_{L^2(\dot B^{d/p-1}_{p,1})}^{h,\wt\eps},\end{array}
$$
$$\begin{array}{lll}
\|J(\eps a^\eps)\Delta(\vartheta^\eps)^{\ell,\wt\eps}\|_{L^1(\dot B^{(d+1)/p-3/2}_{p,1})}
&\!\!\!\!\lesssim\!\!\!\!& \|\eps a^\eps\|_{L^\infty(\dot B^{(d+1)/p-1/2}_{p,1})} \|\Delta\vartheta^\eps\|^{\ell,\wt\eps}_{L^1(\dot B^{d/2-1}_{2,1})},\\[1.5ex]
\|J(\eps a^\eps)\Delta(\vartheta^\eps)^{h,\wt\eps}\|_{L^1(\dot B^{d/p-2}_{p,1})}
&\!\!\!\!\lesssim\!\!\!\!& \|\eps a^\eps\|_{L^\infty(\dot B^{d/p}_{p,1})} \|\Delta\vartheta^\eps\|_{L^1(\dot B^{d/p-2}_{p,1})}^{h,\wt\eps},\\[1.5ex]
\bigl\|\frac\eps{1+\eps a^\eps}
\bigl(2\mu|Du^\eps|^2\!+\!\lambda(\div u^\eps)^2\bigr)\|_{L^1(\dot B^{d/p-2}_{p,1})}
&\!\!\!\!\lesssim\!\!\!\!&\eps (1\!+\!\|\eps a\|_{L^\infty(\dot B^{d/p}_{p,1})})\|Du^\eps\|_{L^2(\dot B^{d/p-1}_{p,1})}^2.
\end{array}$$
Putting all the above inequalities together, remembering of \eqref{eq:uef} and \eqref{eq:strong1}, and setting
$$\delta\!X^\eps:= \|\dT^\eps\|_{L^\infty(\dot B^{(d+1)/p-3/2}_{p,1}+\dot B^{d/p-2}_{p,1})}
+\|\dT^\eps\|_{L^2(\dot B^{(d+1)/p-1/2}_{p,1})+L^1(\dot B^{d/p}_{p,1})},
$$
 we eventually get
$$
\displaylines{\delta\! X^\eps\lesssim \|\Theta^\eps_0-\Theta_0\|_{\dot B^{(d+1)/p-3/2}_{2,1}+\dot B^{d/p-2}_{p,1}}
+\wt\eps\, C_0^{\eps,\nu}+ \nu^{-1}(1+\nu^{-1}C_0^{\eps,\nu})\wt\eps^{\,1/2-1/p}(C_0^{\eps,\nu})^2
\hfill\cr\hfill+ \|\du^\eps\|_{L^\infty(\dot B^{(d+1)/p-3/2}_{p,1}+\dot B^{d/p-2}_{p,1})}
\|\nabla\Theta\|_{L^1(\dot B^{d/p}_{p,1})}+\nu^{-1}C_0^{\eps,\nu} \delta\! X^\eps,}
$$
which allows to conclude to \eqref{eq:convtheta}.
\end{proof}


\section{Appendix} 

In this short appendix, we recall the definition of paraproduct and remainder operators, 
and give some technical estimates that have been used throughout in the paper. 
\medbreak
To start with, let us recall that, in the homogeneous setting, the paraproduct and remainder operators
$T$ and $R$ are formally defined as follows:
$$
T_uv:=\sum_{j\in\Z} \dot S_{j-1}u\, \ddj v\quad\hbox{and}\quad
R(u,v):=\sum_{j\in\Z}\ddj u \,\bigl(\dot\Delta_{j-1}\!+\!\ddj\! +\!\dot\Delta_{j+1}\bigr)v
$$
where $\dot S_k$ stands for the low-frequency cut-off operator defined by 
$\dot S_k:=\chi(2^{-k}D).$ 
\medbreak
The fundamental observation is that 
the general term of $T_uv$ is 
spectrally localized in the annulus $\bigl\{\xi\in\R^d\, ,\, 1/12\leq2^{-j}|\xi|\leq10/3\bigr\},$
and that the general term of $R(u,v)$ is localized in the ball $B(0,2^j.20/3)$
(of course the values $1/12$, $10/3$ and $20/3$ do not matter).
\medbreak
The main interest of the above definition lies in the following Bony's decomposition 
(first introduced in \cite{Bony}):
$$
uv=T_uv+ R(u,v)+T_vu,
$$
that has been used repeatedly  in the present paper.
\medbreak
The following lemma has been used to get appropriate estimates of the solution both in 
the barotropic and in the polytropic cases:
\begin{lem}\label{l:com}  Let $A(D)$ be a $0$-order Fourier multiplier, and $j_0\in\Z.$
Let $s<1,$ $\sigma\in\R$ and $1\leq p,p_1,p_2\leq\infty$ with $1/p=1/p_1+1/p_2.$
Then there exists a constant $C$ depending only on $j_0$ and
on the regularity parameters such that
$$
\|[\dot S_{j_0} A(D), T_a]b\|_{\dot B^{\sigma+s}_{p,1}}
\leq C\|\nabla a\|_{\dot B^{s-1}_{p_1,1} }\|b\|_{\dot B^\sigma_{p_2,\infty}}.
$$
In the limit case $s=1,$  we have
$$
\|[\dot S_{j_0} A(D), T_a]b\|_{\dot B^{\sigma+1}_{p,1}}
\leq C\|\nabla a\|_{L^{p_1}}\|b\|_{\dot B^\sigma_{p_2,1}}.
$$
\end{lem}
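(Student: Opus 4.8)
\textbf{Proof plan for Lemma \ref{l:com}.}

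The plan is to decompose the commutator $[\dot S_{j_0}A(D),T_a]b$ into pieces indexed by the dyadic blocks, and to exploit that, in each summand $\dot S_{j-1}a\,\ddj b$ of the paraproduct, the low-frequency factor $\dot S_{j-1}a$ has Fourier support in a ball of size $\sim 2^j$ while $\ddj b$ lives in an annulus of size $\sim 2^j$; hence the product $\dot S_{j-1}a\,\ddj b$ is spectrally localized in an annulus $\{|\xi|\sim 2^j\}$. Writing $A_{j_0}:=\dot S_{j_0}A(D)=\psi_{j_0}(D)$ for a smooth compactly supported multiplier, the standard first-order-Taylor trick gives, for each $j$,
$$
[A_{j_0},\dot S_{j-1}a]\ddj b(x)=\int h_{j_0}(y)\Bigl(\dot S_{j-1}a(x-y)-\dot S_{j-1}a(x)\Bigr)\ddj b(x-y)\,dy,
$$
where $h_{j_0}$ is the (fixed, $j$-independent) kernel of $A_{j_0}$. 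First I would use the mean value inequality $|\dot S_{j-1}a(x-y)-\dot S_{j-1}a(x)|\le |y|\,\|\nabla\dot S_{j-1}a\|_{L^\infty}$ — or rather its $L^{p_1}$-averaged version via Minkowski's inequality — together with $\int |y|\,|h_{j_0}(y)|\,dy\le C(j_0)<\infty$ and Hölder with $1/p=1/p_1+1/p_2$, to obtain
$$
\bigl\|[A_{j_0},\dot S_{j-1}a]\ddj b\bigr\|_{L^p}\le C(j_0)\,\|\nabla\dot S_{j-1}a\|_{L^{p_1}}\,\|\ddj b\|_{L^{p_2}}.
$$

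Next, I would handle the low-frequency factor: since $\dot S_{j-1}a=\sum_{j'\le j-2}\dot\Delta_{j'}a$, Bernstein's inequality gives, for $s<1$, the convergent bound $\|\nabla\dot S_{j-1}a\|_{L^{p_1}}\lesssim\sum_{j'\le j-2}2^{j'}\|\dot\Delta_{j'}a\|_{L^{p_1}}\lesssim 2^{j(1-s)}\|\nabla a\|_{\dot B^{s-1}_{p_1,1}}$ (the geometric series in $2^{j'(1-s)}$ converges precisely because $s<1$). Because each term $[A_{j_0},\dot S_{j-1}a]\ddj b$ is spectrally supported in an annulus $\{|\xi|\sim 2^j\}$, the $\dot B^{\sigma+s}_{p,1}$ norm of the full sum is $\lesssim\sum_j 2^{j(\sigma+s)}\|[A_{j_0},\dot S_{j-1}a]\ddj b\|_{L^p}$; plugging in the two previous estimates and the trivial $\|\ddj b\|_{L^{p_2}}\le 2^{-j\sigma}\|b\|_{\dot B^\sigma_{p_2,\infty}}$ makes the powers of $2^j$ cancel, leaving $\sum_j$ of a single geometric factor dominated by $\|\nabla a\|_{\dot B^{s-1}_{p_1,1}}\|b\|_{\dot B^\sigma_{p_2,\infty}}$, as claimed. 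For the limit case $s=1$ one cannot sum $2^{j'\cdot 0}$, so instead I would keep $\|\nabla\dot S_{j-1}a\|_{L^{p_1}}\le\|\nabla a\|_{L^{p_1}}$ directly (no Bernstein summation needed), which forces the $\ell^1$ summation to fall on the $b$-blocks, hence the index $1$ on $\|b\|_{\dot B^\sigma_{p_2,1}}$ in the second statement.

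The only genuine subtlety — and the step I expect to require the most care — is the uniform-in-$j$ control of the kernel: although $h_{j_0}$ is $j$-independent here (because $A_{j_0}$ is a fixed multiplier, not $A(D)$ localized at frequency $2^j$), one must still check that $\psi_{j_0}=\chi(2^{-j_0}\cdot)A$ is a smooth function whose inverse Fourier transform $h_{j_0}$ satisfies $\int(1+|y|)|h_{j_0}(y)|\,dy<\infty$, with the implied constant allowed to depend on $j_0$; this follows from $\psi_{j_0}\in\cS$ (as $A$ is a smooth $0$-order multiplier and $\chi(2^{-j_0}\cdot)$ is a smooth compactly supported cutoff). One should also note that the commutator $[A_{j_0},T_a]b=\sum_j[A_{j_0},\dot S_{j-1}a]\ddj b$ is exactly the sum treated above, since $A_{j_0}$ commutes with the projector $\ddj$ only up to the paraproduct structure — but that is precisely what the commutator isolates, so no extra term appears. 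Everything else is the routine Bernstein/Hölder bookkeeping sketched above.
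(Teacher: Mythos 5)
There is a genuine gap at the step you yourself flag as delicate: the integrability of the kernel $h_{j_0}$ of $\dot S_{j_0}A(D)$. Your justification is that $\psi_{j_0}=\chi(2^{-j_0}\cdot)A$ belongs to $\cS$, but this is false for the multipliers the lemma is actually applied to ($A=\cP$ or $\cQ$, and $\cQ=\nabla\Delta^{-1}\div$): a $0$-order multiplier in this context is homogeneous of degree $0$, hence discontinuous at $\xi=0$, so $\psi_{j_0}$ is \emph{not} smooth at the origin even though it is compactly supported. Its inverse Fourier transform $h_{j_0}$ then decays only like $|y|^{-d}$ at infinity (think of $\chi(2^{-j_0}\xi)\,\mathrm{sgn}(\xi)$ in dimension one, whose kernel behaves like $c/y$), so $\int|y|\,|h_{j_0}(y)|\,dy=+\infty$ and in fact $h_{j_0}\notin L^1$. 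The first-order Taylor estimate $\|[A_{j_0},\dot S_{j-1}a]\ddj b\|_{L^p}\le C(j_0)\|\nabla\dot S_{j-1}a\|_{L^{p_1}}\|\ddj b\|_{L^{p_2}}$ therefore has no proof as written. The paper's proof circumvents exactly this: since $\dot S_{j-1}a\,\ddj b$ is spectrally supported in an annulus of size $2^j$, and $A$ is homogeneous of degree $0$, one may replace $\dot S_{j_0}A(D)$ on that piece by $\wt\phi(2^{-j}D)$ (for $j\le j_0-4$) or $\psi(2^{-j_0}D)$ (for $|j-j_0|\le4$) with $\wt\phi,\psi$ smooth and supported in an annulus \emph{away from the origin}; the singularity of $A$ at $\xi=0$ never enters because the function being acted on has no low-frequency content. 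The rescaled kernel $2^{jd}\check{\wt\phi}(2^j\cdot)$ is then a genuine Schwartz function and Lemma 2.97 of \cite{BCD} applies, producing moreover the factor $2^{-j}$.

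Two further remarks. First, your claim that ``the powers of $2^j$ cancel'' is not accurate in your version of the argument: $2^{j(\sigma+s)}\cdot2^{j(1-s)}\cdot2^{-j\sigma}=2^{j}$, so a geometric factor $2^j$ survives, and the sum over $j$ converges only because the commutator vanishes for $j\ge j_0+5$ (both $\dot S_{j_0}A(D)(\dot S_{j-1}a\,\ddj b)$ and $\dot S_{j-1}a\,\dot S_{j_0}A(D)\ddj b$ are killed by the cut-off) --- a restriction you never state explicitly but which your argument crucially needs. The paper's $2^{-j}$ gain, combined with keeping the sum over $k$ (rather than collapsing $\dot S_{j-1}a$ by Bernstein as you do), yields instead the convolution structure $\sum_j c_j\sum_{k\le j-2}2^{-(j-k)(1-s)}d_k$, summable by Young's inequality for series; this is where the hypothesis $s<1$ and the third index $1$ on $\dot B^{s-1}_{p_1,1}$ are genuinely used. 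Second, your treatment of the case $s=1$ inherits the same kernel problem. The correct repair of your plan is, in effect, to reinstate the paper's frequency-localized rewriting of $\dot S_{j_0}A(D)$ before invoking any kernel bound.
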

\begin{proof}
We just treat the case $s<1.$ 
By the definition of paraproduct, we have
$$
[\dot S_{j_0}A(D),T_a]b=\sum_{j\in\Z} [\dot S_{j_0}A(D), \dot S_{j-1}a]\ddj b.
$$
Using that $A(D)$ is homogeneous of degree $0$ and the properties of localization of 
operators $\dot S_k$ and $\dot\Delta_k,$ 
we  get for some smooth function $\wt\phi$ and for $j\leq j_0-4,$
 $$
 [\dot S_{j_0}A(D), \dot S_{j-1}a]\ddj b=\sum_{k\leq j-2} [\wt\phi(2^{-j}D), \dot\Delta_ka]\ddj b.
$$
Applying Lemma 2.97 of \cite{BCD} yields
\begin{equation}\label{eq:ap1}
\|[\wt\phi(2^{-j}D), \dot\Delta_ka]\ddj b \|_{L^p}\lesssim 
2^{-j}\|\dot\Delta_k a\|_{L^{p_1}}\|\ddj b\|_{L^{p_2}}.
\end{equation}
In the case where $j$ is close to $j_0$ (say $|j-j_0|\leq4$), 
one may still find some smooth function $\psi$ supported in an annulus, and such that
$$
[\dot S_{j_0}A(D), \dot S_{j-1}a]\ddj b=[\psi(2^{-j_0}D),\dot S_{j-1}a]\ddj b,
$$
which allows to get again \eqref{eq:ap1}. 
Summing up over $j$ and $k,$ and using convolution inequalities 
for series, it is easy to conclude to the desired inequality.
  \end{proof}
Finally, we recall the following composition result.  
  \begin{prop}\label{p:compo}
   Let $G$ be a smooth function defined on some open interval $I$
of $\R$ containing~$0.$ Assume that  $G(0)=0.$ 
Then for all $s>0,$ bounded interval $J\subset I,$   $1\leq m\leq\infty,$ and 
 function  $a$  valued in $J,$ the following estimates hold true:
$$
\|G(a)\|_{\dot B^{s}_{p,1}}\leq C\|a\|_{\dot B^s_{p,1}}\quad\hbox{and}\quad
\|G(a)\|_{\wt L^m(\dot B^{s}_{p,1})}\leq C\|a\|_{\wt L^m(\dot B^s_{p,1})}.
$$
\end{prop}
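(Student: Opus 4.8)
The statement to prove is the composition result, Proposition \ref{p:compo}: for a smooth $G$ with $G(0)=0$ and $a$ valued in a bounded subinterval $J$ of the domain, one has $\|G(a)\|_{\dot B^s_{p,1}}\leq C\|a\|_{\dot B^s_{p,1}}$ for $s>0$, together with the analogous $\wt L^m$ version. Let me sketch the standard Bony-decomposition/paralinearization approach.

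\medbreak

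The plan is to reduce the nonlinear composition to a paraproduct-type operator via the Meyer paralinearization trick, then estimate each dyadic block. First I would write, for $z$ lying in $J$,
$$G(a)=\sum_{j\in\Z}\bigl(G(\dot S_{j+1}a)-G(\dot S_j a)\bigr)=\sum_{j\in\Z} m_j\,\ddj a,\qquad m_j:=\int_0^1 G'\bigl(\dot S_j a+t\,\ddj a\bigr)\,dt,$$
which is legitimate because $G(\dot S_j a)\to G(0)=0$ in $L^\infty$ as $j\to-\infty$ (using $a\in\dot B^s_{p,1}$ with $s>0$, so $\dot S_j a\to 0$ uniformly) and $\dot S_j a\to a$ as $j\to+\infty$. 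Since $a$ is valued in the bounded set $J$ and each $\dot S_j a+t\,\ddj a$ lies in a fixed neighbourhood of $J$ contained in $I$ (again by boundedness of $a$ and the uniform bound on $\dot S_j$), the multipliers $m_j$ are uniformly bounded in $L^\infty$ by $\sup_{\bar J}|G'|$, and moreover $\na m_j$ is controlled: $\na m_j$ involves $G''$ composed with the same arguments times $\na(\dot S_j a)$ plus $\na\ddj a$, hence $\|\na m_j\|_{L^\infty}\lesssim \|\na \dot S_j a\|_{L^\infty}+\|\na\ddj a\|_{L^\infty}$, which after Bernstein is $\lesssim \sum_{k\le j}2^k\|\ddk a\|_{L^\infty}\cdot$(something summable) — the point being that $m_j$ is ``almost constant at scale $2^{-j}$''. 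Then I would apply $\ddk$ to the sum: only terms with $|j-k|$ bounded, or $j\ge k$, contribute, because $m_j\ddj a$ has Fourier support in a ball of radius $\sim 2^j$. Splitting accordingly and using the Bernstein/commutator estimate for $\|\ddk(m_j\ddj a)\|_{L^p}$ (gaining a factor $2^{k-j}$ when $k<j$ from the almost-constancy of $m_j$), one gets
$$2^{ks}\|\ddk G(a)\|_{L^p}\lesssim \sum_{j\ge k-N} 2^{(k-j)(s+1)\wedge 0}\, 2^{js}\|\ddj a\|_{L^p}\cdot C(G,J)$$
up to harmless terms, and summing in $k$ via Young's inequality for series (the convolution kernel $2^{\min(k-j,0)(s+1)}$ is in $\ell^1$ since $s+1>0$) yields $\|G(a)\|_{\dot B^s_{p,1}}\lesssim \|a\|_{\dot B^s_{p,1}}$. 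The $\wt L^m(\dot B^s_{p,1})$ statement follows by the very same computation performed blockwise, taking the $L^m(0,T)$ norm in time of each $\|\ddj a(t)\|_{L^p}$ before the $\ell^1(\Z)$ summation, and using Minkowski's inequality to exchange the time-integral with the finite (at fixed $k$) sum over $j$ — nothing new is needed beyond the spatial estimate.

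\medbreak

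Alternatively, and perhaps more transparently for this paper's audience, I could invoke the standard composition theorem as stated e.g. in \cite{BCD} (Theorem 2.61 and its $\wt L^m$ companion): the content here is exactly that theorem in the case $r=1$, and one could simply cite it. But giving the short self-contained argument above via the telescoping series $G(a)=\sum_j m_j\ddj a$ together with the key ``$m_j$ is slowly varying'' estimate $\|\na m_j\|_{L^\infty}\lesssim \sum_{k\le j}2^k\|\ddk a\|_{L^\infty}$ is cleaner and keeps the appendix self-contained.

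\medbreak

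The main obstacle — though a routine one — is the bookkeeping in the dyadic summation: one must carefully track the Fourier support of $m_j\ddj a$ (a ball, not an annulus, so $\ddk$ of it is nonzero for \emph{all} $k\le j+N$, not just $|k-j|\le N$) and extract the decaying factor $2^{(k-j)(s+1)}$ for the low-$k$ range by exploiting that $m_j$ is constant up to $O(2^j\|\na m_j\|_\infty)$ on scale $2^{-j}$, i.e. a first-order Taylor/commutator argument on $m_j$. Once that gain is in hand, the summability is immediate from $s>0$ (indeed only $s+1>0$ is needed for this part; the constraint $s>0$ enters through the convergence of the telescoping series and the vanishing of $G$ at $0$). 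Everything else — the uniform $L^\infty$ bounds on $m_j$, Bernstein's inequalities, Young's inequality for $\ell^1$ convolutions, and Minkowski for the time-integrated version — is standard.
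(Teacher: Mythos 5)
The paper does not actually prove Proposition~\ref{p:compo}: it is explicitly \emph{recalled} as a known composition estimate, with the reference being \cite{BCD} (Theorem~2.61 and its tilde-space companion). Your telescoping argument $G(a)=\sum_j\bigl(G(\dot S_{j+1}a)-G(\dot S_ja)\bigr)=\sum_j m_j\ddj a$ with $m_j=\int_0^1G'(\dot S_ja+t\ddj a)\,dt$ is precisely the standard proof of that cited result (Meyer's first linearization), so in substance you are reconstructing the proof the paper delegates to the literature, and it is essentially correct. Two small remarks. First, for $s>0$ the crude bound $\|\dot\Delta_k(m_j\ddj a)\|_{L^p}\lesssim\|m_j\|_{L^\infty}\|\ddj a\|_{L^p}$ for $k\le j+N$ already gives a summable kernel $2^{(k-j)s}$, so the commutator gain $2^{k-j}$ from the slow variation of $m_j$ is not needed here (as you note, it only matters for $s\le 0$); invoking it is harmless but adds bookkeeping. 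Second, your claim that each $\dot S_ja+t\ddj a$ lies in a fixed neighbourhood of $J$ contained in $I$ is not quite right as stated: $\dot S_j$ is convolution with a kernel of $L^1$ norm possibly larger than $1$, so $\dot S_ja$ can take values outside $I$ when $J$ is close to $\d I$. The standard fix is to replace $G$ by a smooth function $\wt G$ defined on all of $\R$, with bounded derivatives, coinciding with $G$ on a neighbourhood of $J$ and vanishing at $0$; since $G(a)=\wt G(a)$, this loses nothing and makes the uniform bounds on $m_j$ and $\na m_j$ legitimate. With that adjustment, and the observation that the $\wt L^m$ version follows by taking the $L^m$ norm in time blockwise before the $\ell^1$ summation exactly as you describe, the argument is complete.
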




\end{document}